\newcommand*\fullref[3][\relax]{%
  \ifdefined\hyperref%
    {\hyperref[#3]{#2\penalty 200\ \ref*{#3}#1}}%
  \else%
    {#2\penalty 200\ \relax\ref{#3}#1}%
  \fi%
}
\newif\ifpgf@rectanglewrc@donecorner@
\def\pgf@rectanglewithroundedcorners@docorner#1#2#3#4{%
  \edef\pgf@marshal{%
    \noexpand\pgfintersectionofpaths
      {%
        \noexpand\pgfpathmoveto{\noexpand\pgfpoint{\the\pgf@xa}{\the\pgf@ya}}%
        \noexpand\pgfpathlineto{\noexpand\pgfpoint{\the\pgf@x}{\the\pgf@y}}%
      }%
      {%
        \noexpand\pgfpathmoveto{\noexpand\pgfpointadd
          {\noexpand\pgfpoint{\the\pgf@xc}{\the\pgf@yc}}%
          {\noexpand\pgfpoint{#1}{#2}}}%
        \noexpand\pgfpatharc{#3}{#4}{\cornerradius}%
      }%
    }%
  \pgf@process{\pgf@marshal\pgfpointintersectionsolution{1}}%
  \pgf@process{\pgftransforminvert\pgfpointtransformed{}}%
  \pgf@rectanglewrc@donecorner@true
}
  \savedmacro\cornerradius{%
    \edef\cornerradius{\pgfkeysvalueof{/pgf/rectangle corner radius}}%
  }
    \edef\pgf@marshal{%
      \noexpand\pgfpointborderrectangle
      {\noexpand\pgfqpoint{\the\pgf@xb}{\the\pgf@yb}}
      {\noexpand\pgfqpoint{\the\pgf@xc}{\the\pgf@yc}}%
    }%
    \pgfextract@process\borderpoint{}%
       \pgf@rectanglewithroundedcorners@docorner{\cornerradius}{0pt}{0}{90}%
\theoremstyle{definition}
\newtheorem{definition}{Definition}[section]
\theoremstyle{plain}
\newtheorem{corollary}[definition]{Corollary}
\newtheorem{lemma}[definition]{Lemma}
\newtheorem{proposition}[definition]{Proposition}
\newtheorem{theorem}[definition]{Theorem}
\numberwithin{equation}{section}
\newcommand*{\defterm}[1]{\emph{#1}}
\newcommand\chyph{\penalty\@M-\hskip\z@skip}
\newcommand\cendash{\penalty\@M--\hskip\z@skip}
\DeclarePairedDelimiter{\parens}{\lparen}{\rparen}
\DeclarePairedDelimiter{\set}{\{}{\}}
\DeclarePairedDelimiterX{\gset}[2]{\{}{\}}{\,#1:#2\,}
\newcommand*{\biggg}{\bBigg@{4}}
\newcommand*{\Biggg}{\bBigg@{5}}
\newcommand*{\sizeddelimiter}[2]{\bBigg@{#1}#2}
\newcommand*{\sizedsurd}[2][]{%
  {\@mathmeasure\z@{\nulldelimiterspace\z@}%
     {\sqrt[#1]{\vcenter to #2\big@size{}}}%
     \box\z@}%
 }
\newcommand*{\nset}{\mathbb{N}}
\newcommand*{\zset}{\mathbb{Z}}
\newcommand*{\emptyword}{\varepsilon}
\newcommand*{\rev}{\mathrm{rev}}
\DeclarePairedDelimiterX{\pres}[2]{\langle}{\rangle}{#1\,\delimsize\vert\,\mathopen{}#2}
\newcommand*\bset{\mathbb{B}}
\begin{document}

\title{Two applications of monoid actions to cross-sections}

\author{Tara Brough}
\address{%
Centro de Matem\'{a}tica e Aplica\c{c}\~{o}es\\
Faculdade de Ci\^{e}ncias e Tecnologia\\
Universidade Nova de Lisboa\\
2829--516 Caparica\\
Portugal
}
\email{%
t.brough@fct.unl.pt
}

\author{Alan J. Cain}
\address{%
Centro de Matem\'{a}tica e Aplica\c{c}\~{o}es\\
Faculdade de Ci\^{e}ncias e Tecnologia\\
Universidade Nova de Lisboa\\
2829--516 Caparica\\
Portugal
}
\email{%
a.cain@fct.unl.pt
}

\thanks{The first author was supported by an {\sc FCT} post-doctoral fellowship ({\sc SFRH}/{\sc BPD}/121469/2016). The
  second author was supported by an Investigador {\sc FCT} fellowship ({\sc IF}/01622/2013/{\sc CP}1161/{\sc
    CT}0001). The first and second authors were partially supported by by the Funda\c{c}\~{a}o para a Ci\^{e}ncia e a
  Tecnologia (Portuguese Foundation for Science and Technology) through the project {\sc UID}/{\sc MAT}/00297/2013
  (Centro de Matem\'{a}tica e Aplica\c{c}\~{o}es) and the projects {\scshape PTDC}/{\scshape MHC-FIL}/2583/2014 and
  {\scshape PTDC}/{\scshape MAT-PUR}/31174/2017. The authors thank Ian McQuillan for supplying references on the
  relationship between ET0L and LIG. The authors thanks the anonymous referee for their careful reading of the paper and
  many valuable suggestions.}

\author{Victor Maltcev}
\address{%
  c/o second author
}
\email{%
  victor.maltcev@googlemail.com
}

\begin{abstract}
  Using a construction that builds a monoid from a monoid action, this paper exhibits an example of a direct product of
  monoids that admits a prefix-closed regular cross-section, but one of whose factors does not admit a regular
  cross-section; this answers negatively an open question from the theory of Markov monoids. The same construction is
  then used to show that for any full trios $\mathfrak{C}$ and $\mathfrak{D}$ such that $\mathfrak{C}$ is not a subclass
  of $\mathfrak{D}$, there is a monoid with a cross-section in $\mathfrak{C}$ but no cross-section in
  $\mathfrak{D}$.
\end{abstract}

\maketitle

\section{Introduction}

Cross-sections (that is, sets of normal forms) of semigroups and groups have been an important area of investigation,
both in their own right and in connection with other topics such as automatic structures
\cite{epstein_wordproc,campbell_autsg}. Sometimes rather suprising results have emerged, such as the free inverse monoid
of rank $1$ having no regular cross-section \cite[Proof of Theorem~2.7]{cutting_regularnf}.

A group is Markov if it admits a prefix-closed regular language of unique representatives. This notion was introduced by
Gromov in his seminal paper on hyperbolic groups \cite[\S~5.2]{gromov_hyperbolic}, and explored further by Ghys \& de~la
Harpe~\cite{ghys_markov}. The study of the concept of being Markov was extended to semigroups and monoids by the second
and third authors \cite{cm_markov}, who proved that the class of Markov monoids is closed under direct product
\cite[Theorem~14.1(1)]{cm_markov} and asked whether this class is closed under direct factors \cite[Question
14.6]{cm_markov}.

One of the main results of this paper is the construction of an example of a monoid that does not have a regular
cross-section but whose direct product with the free group of rank $1$ \emph{does} have a regular cross-section, and
indeed a prefix-closed one.  This proves that neither the class of regular cross-section monoids nor the class of Markov
monoids is closed under direct factors. This may shed new light on the long-standing open questions on whether the
classes of automatic monoids \cite[Question~6.6]{campbell_autsg} and automatic groups \cite[Open
Question~4.1.2]{epstein_wordproc} are closed under taking direct factors. (However, the example constructed herein is
far from being automatic.)

The other main result is that for any full trios $\mathfrak{C}$ and $\mathfrak{D}$ such that $\mathfrak{C}$ is not a
subclass of $\mathfrak{D}$, there is a monoid with a cross-section in $\mathfrak{C}$ but no cross-section in
$\mathfrak{D}$.

The main tool is a construction developed by Maltcev \& Ru\v{s}kuc~\cite{maltcev_hopfian} that produces a semigroup from
a semigroup action or a monoid from a monoid action; the details of the construction are recalled in
\fullref{Subsection}{subsec:monoidactions}.

\section{Preliminaries}
\label{sec:preliminaries}

\subsection{Monoids from monoid actions}
\label{subsec:monoidactions}

This subsection recalls the construction of a monoid from a monoid action introduced (for semigroups) in \cite[\S~5]{maltcev_hopfian}.

Let $M$ be a monoid acting (from the right) on a set $T$. Define the monoid $M[T]$ to be the disjoint union
$M \cup T$ with multiplication in $M$ as before and defined elsewhere by
\[
tm = t \cdot m; \quad mt = t; \quad ty = y \quad\text{for all $t,y \in T$ and $m \in M$.}
\]
It is straightforward to check that this multiplication is associative.

\subsection{Languages and cross-sections}

For the definition of basic language-theoretic concepts, see \cite{hopcroft_automata}. Recall in particular that a
\defterm{full trio} (also called a \defterm{cone}) is a family of languages, containing at least one non-empty language,
closed under homomorphism, inverse homomorphism, and intersection with regular languages. Recall also that full trios
are closed under GSM mappings and inverse GSM mappings \cite[Theorems~11.1 \& 11.2]{hopcroft_automata}.  Full trios are
also closed under the operation of union with a regular language. Since this fact does not seem to be explicitly stated
in the literature, we give a proof here:

\begin{lemma}
  \label{lem:fulltrioclosureunionreg}
  Full trios are closed under the operation of union with a regular language.
\end{lemma}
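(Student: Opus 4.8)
The plan is to reduce the union to the three defining operations of a full trio, using the regularity of $R$ in an essential way to compensate for the fact that a full trio need not be closed under arbitrary union. Write $\mathfrak{C}$ for the full trio in question, and suppose $L \in \mathfrak{C}$ and $R$ regular, both over an alphabet $A$ (after replacing $A$ by the union of the two alphabets if necessary). First I would dispose of the degenerate case: if $L = \emptyset$ then $L \cup R = R$, so it suffices to know that every full trio contains all regular languages. This follows in three steps — collapse some nonempty member of $\mathfrak{C}$ to $\{\varepsilon\}$ by an erasing homomorphism, pull $\{\varepsilon\}$ back to the full language $B^*$ over an arbitrary alphabet $B$ by the inverse of the homomorphism sending every letter to $\varepsilon$, and then intersect $B^*$ with the given regular language. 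From now on I assume $L \neq \emptyset$ and fix a witness $w_0 \in L$.

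The core of the construction introduces a disjoint primed copy $A' = \{a' : a \in A\}$ and works over $B = A \cup A'$. I would define two homomorphisms $g, h : B^* \to A^*$: let $g$ erase primed letters and fix unprimed ones ($g(a) = a$, $g(a') = \varepsilon$), so that the condition $g(w) \in L$ certifies membership via the unprimed track; and let $h$ erase unprimed letters and unprime the rest ($h(a) = \varepsilon$, $h(a') = a$), so that $h$ reads the output word off the primed track. The aim is to produce a single regular selector $S$ with $h\big(g^{-1}(L) \cap S\big) = L \cup R$. Since $g^{-1}(L) \cap S$ then lies in $\mathfrak{C}$ by closure under inverse homomorphism and intersection with a regular language, and $h$-images stay in $\mathfrak{C}$, this would finish the proof.

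For the selector I would take $S = S_1 \cup S_2$, a union of two regular languages. The first, $S_1 = \big(\bigcup_{a \in A} a'a\big)^*$, forces the primed and unprimed tracks to spell the same word, so that on $S_1$ both $g$ and $h$ return that word; intersecting with $g^{-1}(L)$ retains exactly the words of $L$, and applying $h$ recovers $L$. The second, $S_2 = \{w_0\} \cdot R'$, where $R' = \{r' : r \in R\}$ is the primed image of $R$ (regular, as a homomorphic image of a regular language), places the fixed witness $w_0$ on the unprimed track and an arbitrary word of $R$ on the primed track. Here $g(w_0 r') = w_0 \in L$ always, so $g^{-1}(L) \cap S_2 = S_2$, while $h(w_0 r') = r$; thus the $h$-image of this part is exactly $R$. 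Combining the two pieces gives $L \cup R$.

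The main obstacle is precisely the decoupling that $S_2$ achieves. Because a full trio is not closed under union in general, the $R$-part cannot simply be adjoined and must instead be smuggled into the inverse image of $L$; attaching every word of $R$ to the fixed certificate $w_0$ is what guarantees each such word survives the intersection with $g^{-1}(L)$, while the separate primed track lets $h$ discard the certificate and return $R$ intact. The remaining work — checking that the two tracks do not interfere and that $S_1$ and $S_2$ reconstruct $L$ and $R$ respectively (including the boundary case $\varepsilon \in L$, handled by the empty product in $S_1$) — is the one routine verification on which the argument rests.
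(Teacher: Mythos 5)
Your proof is correct, and it takes a genuinely different route from the paper's, although both rest on the same key idea: a non-empty $L$ supplies a witness $w_0$ whose membership certificate lets $R$ ride along through the closure operations. The paper handles the empty case identically (but cites, rather than proves, that full trios contain all regular languages), and then proceeds via a marker symbol: it pulls $L$ back under the homomorphism fixing $A$ and sending a new symbol $x$ to $w_0$, obtaining $L \cup \{x\} \in \mathfrak{C}$ (implicitly this also needs an intersection with the regular language $A^* \cup \{x\}$, which the paper glosses over), and then applies the substitution fixing $A$ and sending $x \mapsto R$, invoking the closure of full trios under substitution by regular sets (Hopcroft--Ullman, Theorem~11.4). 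Your two-track construction avoids that cited closure theorem entirely: the inverse homomorphism $g^{-1}$, the intersection with the regular selector $S_1 \cup S_2$, and the output homomorphism $h$ are precisely the three defining operations of a full trio, and you also derive from scratch the auxiliary fact about regular languages. In effect, your $S_2 = \{w_0\}\cdot R'$ re-proves by hand the one instance of regular substitution the paper needs (substituting $R$ for a marker that only ever occurs as a whole word), while $S_1$ realises the identity substitution on $L$. What the paper's argument buys is brevity, at the price of leaning on a nontrivial cited theorem; what yours buys is a self-contained proof from the definition alone, with the only verification being the routine computation that $h\bigl(g^{-1}(L) \cap (S_1 \cup S_2)\bigr) = L \cup R$, which indeed holds.
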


\begin{proof}
  Let $\mathfrak{C}$ be a full trio, let $L \in \mathfrak{C}$ and let $R$ be a regular language. If $L$ is empty, then
  $L \cup R$ is regular and so $L \cup R \in \mathfrak{C}$ since a full trio contains all regular languages
  \cite[p.~271]{hopcroft_automata}. So suppose $L$ is non-empty. Let $A$ be the alphabet of $L \cup R$ and let $x$ be a
  new symbol not in $A$. Define a homomorphism that fixes each symbol in $A$ and maps $x$ to some word in $L$. Then
  $L \cup \set{x}$ is also in $\mathfrak{C}$ by the closure of full trios under inverse homomorphism. Now consider the
  substitution that fixes each symbol in $A$ and maps $x$ to $R$; the image of $L \cup \set{x}$ under this substitution
  is $L \cup R$. By the closure of full trios under substitution with regular sets \cite[Theorem
  11.4]{hopcroft_automata}, $L \cup R \in \mathfrak{C}$.
\end{proof}

For the definitions of tree-adjoining languages, see \cite{kallmeyer_parsing}; for ET0L languages, see
\cite{rozenberg_extension}; and for indexed languages see \cite{aho_indexed}.

Recall that a \defterm{cross-section} (or \defterm{set of normal forms}) for a monoid $M$ is a language $L$ over some
generating set $A$ for $M$ such that every element of $M$ has a unique representative in $L$. Generally, we are
interested in cross-sections lying in some natural class of languages, which are usually made up of languages over a
finite alphabet (so that the cross section is over some finite generating set). A \defterm{Markov monoid} is a monoid
with a prefix-closed regular cross-section over some (necessarily finite) generating set; see \cite{cm_markov} for
background reading.

\begin{lemma}
  \label{lem:changegens}
  Let $M$ be a monoid that has a cross-section in a class of languages $\mathfrak{C}$ that is closed under
  homomorphisms. Let $A$ be a generating set for $M$. Then $M$ has a cross-section over $A$ in $\mathfrak{C}$.
\end{lemma}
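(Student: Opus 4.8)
The plan is to transport the given cross-section from its original generating set onto $A$ by means of a suitable evaluation homomorphism, using only the closure of $\mathfrak{C}$ under homomorphisms. Suppose $M$ has a cross-section $L \subseteq B^{*}$ in $\mathfrak{C}$, where $B$ is some generating set. Since $A$ also generates $M$, I would first, for each letter $b \in B$, choose a word $\phi(b) \in A^{*}$ representing the same element of $M$ as $b$, and then extend $\phi$ to a monoid homomorphism $\phi \colon B^{*} \to A^{*}$. By construction, $\phi(w)$ and $w$ represent the same element of $M$ for every $w \in B^{*}$.

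The candidate cross-section over $A$ is then $L' = \phi(L)$. First I would check that $L'$ is indeed a cross-section. Since $L$ contains a representative of every element of $M$ and $\phi$ preserves the represented element, $\phi(L)$ also contains a representative of every element of $M$. For uniqueness, suppose $u, v \in L$ with $\phi(u) = \phi(v)$; then $u$ and $v$ represent the same element of $M$, and since $L$ is a cross-section this forces $u = v$. Hence $\phi$ restricts to a bijection $L \to L'$. As the evaluation map $L \to M$ is a bijection and factors as the evaluation $L' \to M$ composed with this restriction of $\phi$, it follows that the evaluation $L' \to M$ is itself a bijection; that is, $L'$ is a cross-section over $A$.

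Finally, $L' = \phi(L)$ lies in $\mathfrak{C}$ because $\mathfrak{C}$ is closed under homomorphisms. The only point requiring genuine care is the uniqueness argument: one must verify that $\phi$ does not identify two distinct elements of $L$, and this is precisely where the cross-section property of $L$ (distinct words representing distinct elements) is invoked. Everything else is routine bookkeeping with evaluation maps, so I expect no serious obstacle beyond setting up $\phi$ correctly.
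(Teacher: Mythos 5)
Your proposal is correct and follows essentially the same route as the paper's proof: choose a word in $A^*$ representing each generator $b \in B$, extend to a homomorphism $\phi \colon B^* \to A^*$, take the image $\phi(L)$, and invoke closure of $\mathfrak{C}$ under homomorphisms. The paper leaves the cross-section verification as ``clear,'' whereas you spell out the injectivity argument; otherwise the two proofs coincide.
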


[This technical result is trivial to prove, but does not seem to appear explicitly in the literature in the form
used in this paper, so a proof is supplied.]

\begin{proof}
  Let $M$ have a cross-section $K$ in $\mathfrak{C}$ with respect to some generating set $B$. For each $b \in B$, choose
  a word $u_b \in A^*$ representing $b$. Let $\phi : B^* \to A^*$ be the homomorphism that extends the map
  $b \mapsto u_b$. Let $L = \phi(K)$. Then $L$ is clearly also a cross-section for $M$, and, since $\mathfrak{C}$ is
  closed under homomorphisms, $L$ is also in $\mathfrak{C}$.
\end{proof}

\section{Direct factors}

This section is dedicated to using the construction described in \fullref{Subsection}{subsec:monoidactions} to define a
monoid that does not admit a regular cross-section, but whose direct product with $\zset$ admits a regular
cross-section.

Let $F$ be the free monoid with basis $\{x,y,y',z,z'\}$. Let
\begin{align*}
P &= \gset{p_{\alpha,\beta}}{\alpha \in \nset \cup \set{0}0, \beta \in \zset} \\
T &= P \cup \{\Omega\}
\end{align*}
For convenience, let $\bset = \gset{2^k}{k \in \nset \cup \set{0}}$. Define an action of the generators $x,y,y',z,z'$ on
$T$ as follows:
\begin{align*}
p_{\alpha,\beta} \cdot x &= \begin{cases}p_{\alpha+1,0} & \text{if $\beta = 0$,} \\
\Omega & \text{if $\beta \neq 0$;}
\end{cases} \\
p_{\alpha,\beta} \cdot y &=
\begin{cases}
p_{\alpha,\beta+1} & \text{if $\alpha \notin \bset$,} \\
p_{\alpha,0} & \text{if $\alpha \in \bset$ and $\beta = 0$,} \\
\Omega & \text{if $\alpha \in \bset$ and $\beta \neq 0$;}
\end{cases} \displaybreak[0]\\
p_{\alpha,\beta} \cdot y' &=
\begin{cases}
p_{\alpha,\beta-1} & \text{if $\alpha \notin \bset$,} \\
p_{\alpha,0} & \text{if $\alpha \in \bset$ and $\beta = 0$,} \\
\Omega & \text{if $\alpha \in \bset$ and $\beta \neq 0$;}
\end{cases} \displaybreak[0]\\
p_{\alpha,\beta} \cdot z &=
\begin{cases}
p_{\alpha,\beta} & \text{if $\alpha \notin \bset$;} \\
p_{\alpha,\beta+1} & \text{if $\alpha \in \bset$,}
\end{cases} \displaybreak[0]\\
p_{\alpha,\beta} \cdot z' &=
\begin{cases}
p_{\alpha,\beta} & \text{if $\alpha \notin \bset$.} \\
p_{\alpha,\beta-1} & \text{if $\alpha \in \bset$,}
\end{cases} \\
\Omega \cdot x &= \Omega \cdot y  = \Omega \cdot y'  = \Omega \cdot z  = \Omega \cdot z' = \Omega
\end{align*}
\fullref{Figure}{fig:action} illustrates the action of $F$ on $T$.

\begin{figure}[t]
  \centering
  \begin{tikzpicture}[x=16mm,y=12mm]
    \useasboundingbox (-1,-5) -- (8,5);
    \begin{scope}[every node/.style={rectangle with rounded corners,draw=gray,inner sep=.5mm,font=\small}]
      \foreach\x in {0,1,2,3,4} {
        \draw (\x,0) node (p\x 0) {$p_{\x,0}$};
        \foreach\y in {-1,1} {
          \draw (\x,\y) node[draw=white] (p\x\y) {};
        };
      };
      \foreach\x in {3,4} {
        \foreach\y in {-3,-2,-1,1,2,3} {
          \draw (\x,\y) node (p\x\y) {$p_{\x,\y}$};
        };
        \foreach\y in {-4,4} {
          \draw (\x,\y) node[draw=white] (p\x\y) {};
        };
      };
      \draw (7,0) node (omega) {$\Omega$};
    \end{scope}
    \begin{scope}[every node/.style={inner sep=.5mm,font=\footnotesize}]
      \foreach\x/\xnext in {0/1,1/2,2/3,3/4} {
        \draw[->] (p\x 0) edge node[auto] {$x$} (p\xnext 0);
      };
      \draw[->,densely dashed] (p40) edge node[auto] {$x$} ++(1,0);
      \foreach\y/\ynext in {-3/-2,-2/-1,-1/0,0/1,1/2,2/3} {
        \draw[->] (p3\y) edge[bend left] node[auto] {$y$} (p3\ynext);
        \draw[->] (p3\ynext) edge[bend left] node[auto] {$y'$} (p3\y);
      };
      \draw[->,densely dashed] (p33) edge[bend left] node[auto] {$y$} (p34);
      \draw[->,densely dashed] (p34) edge[bend left] node[auto] {$y'$} (p33);
      \draw[->,densely dashed] (p3-4) edge[bend left] node[auto] {$y$} (p3-3);
      \draw[->,densely dashed] (p3-3) edge[bend left] node[auto] {$y'$} (p3-4);
      \foreach\y/\ynext in {-3/-2,-2/-1,-1/0,0/1,1/2,2/3} {
        \draw[->] (p4\y) edge[bend left] node[auto] {$z$} (p4\ynext);
        \draw[->] (p4\ynext) edge[bend left] node[auto] {$z'$} (p4\y);
      };
      \draw[->,densely dashed] (p43) edge[bend left] node[auto] {$z$} (p44);
      \draw[->,densely dashed] (p44) edge[bend left] node[auto] {$z'$} (p43);
      \draw[->,densely dashed] (p4-4) edge[bend left] node[auto] {$z$} (p4-3);
      \draw[->,densely dashed] (p4-3) edge[bend left] node[auto] {$z'$} (p4-4);
      \foreach\x in {0} {
        \draw[->,densely dashed] (p\x 0) edge[bend left] node[auto] {$y$} (p\x 1);
        \draw[->,densely dashed] (p\x 1) edge[bend left] node[auto] {$y'$} (p\x 0);
        \draw[->,densely dashed] (p\x -1) edge[bend left] node[auto] {$y$} (p\x 0);
        \draw[->,densely dashed] (p\x 0) edge[bend left] node[auto] {$y'$} (p\x -1);
      };
      \foreach\x in {1,2} {
        \draw[->,densely dashed] (p\x 0) edge[bend left] node[auto] {$z$} (p\x 1);
        \draw[->,densely dashed] (p\x 1) edge[bend left] node[auto] {$z'$} (p\x 0);
        \draw[->,densely dashed] (p\x -1) edge[bend left] node[auto] {$z$} (p\x 0);
        \draw[->,densely dashed] (p\x 0) edge[bend left] node[auto] {$z'$} (p\x -1);
      }
      \foreach\y/\ynext in {1,2,3} {
        \draw[rounded corners=3mm] (p4\y) -- ++(.5,.5) |- (5,4);
        \draw[rounded corners=3mm] (p4-\y) -- ++(.5,-.5) |- (5,-4);
        \draw[rounded corners=3mm] (p3\y) -- ++(.5,.5) |- (4,4.5);
        \draw[rounded corners=3mm] (p3-\y) -- ++(.5,-.5) |- (4,-4.5);
      };
      \draw[<-,rounded corners=3mm] (omega) -- ++(-.5,.5) |- node[auto,pos=.75] {$x,y,y'$} (5,4);
      \draw[<-,rounded corners=3mm] (omega) -- ++(-.5,-.5) |- node[auto,pos=.75] {$x,y,y'$} (5,-4);
      \draw[->,rounded corners=3mm] (4,4.5) -| node[auto,pos=.25] {$x$} (omega);
      \draw[->,rounded corners=3mm] (4,-4.5) -| node[anchor=north,pos=.25] {$x$} (omega);
    \end{scope}
  \end{tikzpicture}
  \caption{Diagram of the action of $A$ on $T$. Actions which fix
    points of $T$ (which would be loops at some vertex) are omitted
    for clarity. The elements $p_{3,\beta}$ and elements $p_{4,\beta}$
    are shown to illustrate the different action on $p_{\alpha,\beta}$
    when $\alpha$ is and is not a power of $2$.}
  \label{fig:action}
\end{figure}
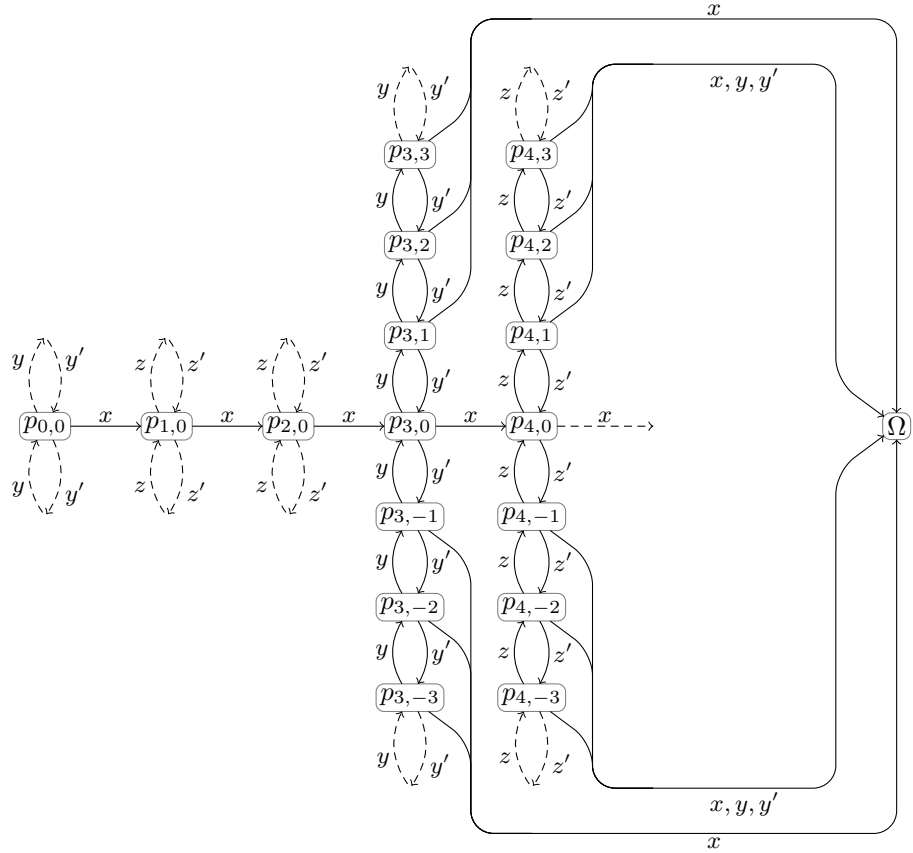

\begin{proposition}
  \label{prop:dirprodcrosssection}
  The direct product $F[T] \times \zset$ has a prefix-closed regular cross-section.
\end{proposition}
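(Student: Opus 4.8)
The plan is to exhibit a finite generating set $A$ for $F[T]\times\zset$ together with an explicit regular, prefix-closed language $L\subseteq A^{*}$ that evaluates bijectively onto $F[T]\times\zset$. Every element lies in exactly one of three families: a pair $(f,n)$ with $f$ in the free monoid $F$; a pair $(p_{\alpha,\beta},n)$; or a pair $(\Omega,n)$. The first and last families are easy; all the difficulty is concentrated in the second. Indeed, to pass from $p_{\alpha,0}$ to $p_{\alpha,\beta}$ one is forced to use $y,y'$ when $\alpha\notin\bset$ but $z,z'$ when $\alpha\in\bset$, and no finite automaton reading $x^{\alpha}$ can decide which case occurs; this is exactly the obstruction to $F[T]$ itself having a regular cross-section. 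The role of the factor $\zset$ will be to render that choice invisible.

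The device I would use is to take the four letters $y,y',z,z'$ twisted by $\pm1$ in the $\zset$-coordinate. Concretely, let $A$ consist of $\hat x=(x,0)$, $\hat p=(p_{0,0},0)$, $c=(\emptyword,1)$, $c'=(\emptyword,-1)$, and the twisted generators $\hat y=(y,1)$, $\hat y'=(y',-1)$, $\hat z=(z,-1)$, $\hat z'=(z',1)$. The computation to record is the effect on $(p_{\alpha,0},0)$ of a suffix $w_{y}w_{z}$, where $w_{y}\in\set{\hat y}^{*}\cup\set{\hat y'}^{*}$ has net exponent $P\in\zset$ and $w_{z}\in\set{\hat z}^{*}\cup\set{\hat z'}^{*}$ has net exponent $Q\in\zset$. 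If $\alpha\notin\bset$ then $z,z'$ fix every point and $y,y'$ do the shifting, so the image is $(p_{\alpha,P},P-Q)$; if $\alpha\in\bset$ then, because $w_{y}$ acts while the second index is still $0$, the letters $y,y'$ merely fix the point rather than sending it to $\Omega$, and now $z,z'$ do the shifting, so the image is $(p_{\alpha,Q},P-Q)$. In both cases, as $(P,Q)$ ranges over $\zset^{2}$ the image ranges bijectively over $\set{p_{\alpha,\beta}:\beta\in\zset}\times\zset$ — through $(\beta,n)=(P,P-Q)$ when $\alpha\notin\bset$ and through $(\beta,n)=(Q,P-Q)$ when $\alpha\in\bset$. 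Thus one regular family of suffixes serves both cases simultaneously, and the twist by $\zset$ is precisely what makes the map injective, by tying down the otherwise free count that does no work in the action.

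With this understood I would set
\[
L=L_{1}\cup L_{2}\cup L_{3},
\]
where $L_{1}=\set{\hat x,\hat y,\hat y',\hat z,\hat z'}^{*}\bigl(\set{c}^{*}\cup\set{c'}^{*}\bigr)$ normalises the pairs $(f,n)$ (spell $f$ letter by letter, then correct the $\zset$-coordinate), where $L_{2}=\hat p\,\set{\hat x}^{*}\bigl(\set{\hat y}^{*}\cup\set{\hat y'}^{*}\bigr)\bigl(\set{\hat z}^{*}\cup\set{\hat z'}^{*}\bigr)$ normalises the pairs $(p_{\alpha,\beta},n)$ by the previous paragraph, and where $L_{3}=\hat p\,\hat y\,\hat x\bigl(\set{c}^{*}\cup\set{c'}^{*}\bigr)$ normalises the pairs $(\Omega,n)$, using $p_{0,0}\cdot y\cdot x=\Omega$. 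Each piece is visibly regular, the three are pairwise disjoint as languages, and by the analysis each maps bijectively onto its family; since the families partition $F[T]\times\zset$, evaluation is a bijection $L\to F[T]\times\zset$.

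Finally I would verify prefix-closedness by reading $L$ as a spanning tree of the right Cayley graph rooted at $\emptyword$: prefixes of an $L_{1}$-word stay in $L_{1}$, prefixes of an $L_{2}$-word are again of the form $\hat p\,\hat x^{\alpha'}(\text{prefix of }w_{y}w_{z})$ and so lie in $L_{2}$ (with $\hat p\in L_{2}$ whose predecessor $\emptyword$ lies in $L_{1}$), and prefixes of an $L_{3}$-word lie in $L_{2}\cup L_{3}$. The one genuinely load-bearing design choice is that in $L_{2}$ all $y$-letters precede all $z$-letters: this is exactly what guarantees, for $\alpha\in\bset$, that the $y$-letters are applied while the second index is $0$, so no word ever falls into $\Omega$. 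I expect the bijectivity of $L_{2}$ — checking at once, for powers of two and for non-powers of two, that $(P,Q)\mapsto(\beta,n)$ is a bijection — to be the crux; the handling of $L_{1}$ and $L_{3}$ and the prefix-closure are routine.
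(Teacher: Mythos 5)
Your proof is correct and takes essentially the same approach as the paper: the same decomposition into the three families $F\times\zset$, $P\times\zset$, $\set{\Omega}\times\zset$, and the same key device of $\zset$-twisted generators $(y,\pm1)$, $(z,\mp1)$ so that one regular pattern $\hat p\,\hat x^{*}(\text{$y$-block})(\text{$z$-block})$ covers both the $\alpha\in\bset$ and $\alpha\notin\bset$ cases bijectively. The only (immaterial) differences are cosmetic: the paper adds $(\Omega,0)$ as a generator instead of reaching $\Omega$ via $\hat p\,\hat y\,\hat x$, keeps untwisted copies of $y,y',z,z'$ for the $F\times\zset$ part, and twists $z$ with the same sign as $y$ rather than the opposite one.
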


\begin{proof}
  Let
  \[
    A = \set{a,b_0,b_1,b'_0,b'_{-1},c_0,c_1,c'_0,c'_{-1},d_1,d_{-1},e,f},
  \]
  where
  \begin{align*}
    a &= (x,0) \\
    b_0 &= (y,0) & b'_0 &= (y',0) \\
    b_1 &= (y,1) & b'_{-1} &= (y',-1) \\
    c_0 &= (z,0) & c'_0 &= (z',0) \\
    c_1 &= (z,1) & c'_{-1} &= (z',-1) \\
    d_1 &= (\emptyword,1) & d_{-1} &= (\emptyword,-1) \\
    e &= (p_{0,0},0) \\
    f &= (\Omega,0).
  \end{align*}
  Let
  \begin{align*}
    L_1 &= ea^*b_1^*c_1^* \cup ea^*b_1^*{c'_{-1}}^* \cup ea^*{b'_{-1}}^*c_1^* \cup ea^*{b'_{-1}}^*{c'_{-1}}^* \cup fd_1^* \cup fd_{-1}^*, \\
    L_2 &= \set{a,b_0,b'_0,c_0,c'_0}^*d_1^* \cup \set{a,b_0,b'_0,c_0,c'_0}^*d_{-1}^*,
  \end{align*}
  and let $L = L_1 \cup L_2$. By inspection, the regular language $L$ is prefix-closed.

  We claim that $L$ is a regular cross-section for $F[T] \times \zset$. Note first that every word in $L_1$ is non-empty
  and begins with $e$ or $f$, whereas no word in $L_2$ begins with $e$ or $f$. Hence the union $L_1 \cup L_2$ is disjoint. The aim is
  now to show that every element of $T \times \zset$ has a unique representative in $L_1$ and that every element of
  $F \times \zset$ has a unique representative in $L_2$.

  It is easy to see that $L_2$ maps bijectively onto $F \times \zset$, for the prefix in $\set{a,b_0,b'_0,c_0,c'_0}^*$
  maps bijectively onto $F \times \set{0}$ while the suffix in $d_1^* \cup d_{-1}^*$ is uniquely determined by the
  $\zset$-coordinate.

  To show that $L_1$ maps bijectively onto $T \times \zset$, note first that words of the form $fd_1^* \cup fd_{-1}^*$
  map bijectively onto $\{\Omega\} \times \zset$.

  It remains to show that the set of words of the form
  \[
    ea^\alpha b_1^\beta c_1^\gamma,\quad ea^\alpha b_1^\beta {c'_{-1}}^\gamma,\quad ea^\alpha {b'_{-1}}^\beta c_1^\gamma,\quad ea^\alpha {b'_{-1}}^\beta {c'_{-1}}^\gamma
  \]
  maps bijectively onto $P \times \zset$. In each case, the prefix $ea^\alpha$ labels a path in the Cayley graph of
  $F[T]$ that leads to the element $(p_{\alpha,0},0)$. The paths then stay within the set
  $\gset{p_{\alpha,\beta}}{\beta \in \zset} \times \zset$, but `look' quite different depending on whether
  $\alpha \in \bset$ (see \fullref{Figure}{fig:pathsinbset}) or $\alpha \notin \bset$ (see
  \fullref{Figure}{fig:pathsnotinbset}).
  \begin{figure}[t]
    \centering
    \begin{tikzpicture}[x=17mm,y=12mm]
      \useasboundingbox (-3,-3) -- (3,3);
      \begin{scope}[every node/.style={rectangle with rounded corners,draw=gray,inner sep=.5mm,font=\scriptsize}]
        \foreach\y in {-3,-2,-1,0,1,2,3} {
          \foreach\x in {-3,-2,-1,0,1,2,3} {
            \draw (\x,\y) node (p\x\y) {$(p_{\alpha,\x},\y)$};
          };
          \foreach\x in {-4,4} {
            \draw (\x,\y) node[draw=white] (p\x\y) {};
          };
        };
      \end{scope}
      \begin{scope}[every node/.style={inner sep=.2mm,font=\scriptsize}]
        \foreach\y/\ynext in {-3/-2,-2/-1,-1/0,0/1,1/2,2/3} {
          \ifnum \y < 0 %
          \draw[->] (p0\ynext) edge node[auto] {$b'_{-1}$} (p0\y);
          \else
          \draw[->] (p0\y) edge node[auto] {$b_1$} (p0\ynext);
          \fi
        };
        \foreach\x/\xnext in {-3/-2,-2/-1,-1/0,0/1,1/2,2/3} {
          \ifnum \x < 0 %
          \draw[->] (p\xnext\xnext) edge node[auto] {$c'_{-1}$} (p\x\x);
          \else
          \draw[->] (p\x\x) edge node[auto] {$c_1$} (p\xnext\xnext);
          \fi
        };
        \draw[->] (p0-1) edge node[auto] {$c_1$} (p10);
        \draw[->] (p10) edge node[auto] {$c_1$} (p21);
        \draw[->] (p21) edge node[auto] {$c_1$} (p32);
        \draw[->] (p0-2) edge node[auto] {$c_1$} (p1-1);
        \draw[->] (p1-1) edge node[auto] {$c_1$} (p20);
        \draw[->] (p20) edge node[auto] {$c_1$} (p31);
        \draw[->] (p0-3) edge node[auto] {$c_1$} (p1-2);
        \draw[->] (p1-2) edge node[auto] {$c_1$} (p2-1);
        \draw[->] (p2-1) edge node[auto] {$c_1$} (p30);
        \draw[->] (p1-3) edge node[auto] {$c_1$} (p2-2);
        \draw[->] (p2-3) edge node[auto] {$c_1$} (p3-2);
        \draw[->] (p2-2) edge node[auto] {$c_1$} (p3-1);
        \draw[->] (p01) edge node[auto] {$c_1$} (p12);
        \draw[->] (p12) edge node[auto] {$c_1$} (p23);
        \draw[->] (p02) edge node[auto] {$c_1$} (p13);
        \draw[->] (p0-2) edge node[auto] {$c'_{-1}$} (p-1-3);
        \draw[->] (p0-1) edge node[auto] {$c'_{-1}$} (p-1-2);
        \draw[->] (p-1-2) edge node[auto] {$c'_{-1}$} (p-2-3);
        \draw[->] (p01) edge node[auto] {$c'_{-1}$} (p-10);
        \draw[->] (p-10) edge node[auto] {$c'_{-1}$} (p-2-1);
        \draw[->] (p-2-1) edge node[auto] {$c'_{-1}$} (p-3-2);
        \draw[->] (p02) edge node[auto] {$c'_{-1}$} (p-11);
        \draw[->] (p-11) edge node[auto] {$c'_{-1}$} (p-20);
        \draw[->] (p-20) edge node[auto] {$c'_{-1}$} (p-3-1);
        \draw[->] (p03) edge node[auto] {$c'_{-1}$} (p-12);
        \draw[->] (p-12) edge node[auto] {$c'_{-1}$} (p-21);
        \draw[->] (p-21) edge node[auto] {$c'_{-1}$} (p-30);
        \draw[->] (p-13) edge node[auto] {$c'_{-1}$} (p-22);
        \draw[->] (p-22) edge node[auto] {$c'_{-1}$} (p-31);
        \draw[->] (p-23) edge node[auto] {$c'_{-1}$} (p-32);
      \end{scope}
    \end{tikzpicture}
    \caption{Diagram of paths labelled by elements of $L_1$ in the Cayley graph of $F[T] \times \zset$, showing the
      paths passing through the subset $\gset{p_{\alpha,\beta}}{\beta \in \zset} \times \zset$, where
      $\alpha \in \bset$.}
    \label{fig:pathsinbset}
  \end{figure}
  \begin{figure}[t]
    \centering
    \begin{tikzpicture}[x=17mm,y=12mm]
      \useasboundingbox (-3,-3) -- (3,3);
      \begin{scope}[every node/.style={rectangle with rounded corners,draw=gray,inner sep=.5mm,font=\scriptsize}]
        \foreach\y in {-3,-2,-1,0,1,2,3} {
          \foreach\x in {-3,-2,-1,0,1,2,3} {
            \draw (\x,\y) node (p\x\y) {$(p_{\alpha,\x},\y)$};
          };
          \foreach\x in {-4,4} {
            \draw (\x,\y) node[draw=white] (p\x\y) {};
          };
        };
      \end{scope}
      \begin{scope}[every node/.style={inner sep=.2mm,font=\scriptsize}]
        \foreach\x/\xnext in {0/1,1/2,2/3} {
          \draw[->] (p\x\x) edge node[auto] {$b_1$} (p\xnext\xnext);
        };
        \foreach\x/\xnext in {0/-1,-1/-2,-2/-3} {
          \draw[->] (p\x\x) edge node[auto] {$b'_{-1}$} (p\xnext\xnext);
        };
        \foreach \x in {-3,-2,-1,0,1,2,3} {
          \foreach\y/\ynext in {-3/-2,-2/-1,-1/0,0/1,1/2,2/3} {
            \ifnum \y < \x %
            \draw[->] (p\x\ynext) edge node[auto] {$c'_{-1}$} (p\x\y);
            \else
            \draw[->] (p\x\y) edge node[auto] {$c_1$} (p\x\ynext);
            \fi
          };
        };
      \end{scope}
    \end{tikzpicture}
    \caption{Diagram of paths labelled by elements of $L_1$ in the Cayley graph of $F[T] \times \zset$, showing the
      paths passing through the subset $\gset{p_{\alpha,\beta}}{\beta \in \zset} \times \zset$, where
      $\alpha \notin \bset$.}
    \label{fig:pathsnotinbset}
  \end{figure}
  To prove bijectivity formally, proceed as follows. First, note that
  \begin{align*}
    ea^\alpha b_1^\beta c_1^\gamma &= (p_{0,0},0)(x,0)^\alpha (y,1)^\beta (z,1)^\gamma \\
                                   &= (p_{\alpha,0},0)(y,1)^\beta (z,1)^\gamma \displaybreak[0]\\
                                   &= \begin{cases}
                                     (p_{\alpha,\beta},\beta) (z,1)^\gamma & \text{if $\alpha \neq 2^k$ for any $k$} \\
                                     (p_{\alpha,0},\beta) (z,1)^\gamma & \text{if $\alpha = 2^k$ for some $k$}
                                         \end{cases} \displaybreak[0]\\
                                   &= \begin{cases}
                                     (p_{\alpha,\beta},\beta+\gamma) & \text{if $\alpha \neq 2^k$ for any $k$} \\
                                     (p_{\alpha,\gamma},\beta+\gamma) & \text{if $\alpha = 2^k$ for some $k$.}
                                   \end{cases}
  \end{align*}
  Similarly,
  \begin{align*}
    ea^\alpha b_1^\beta {c'_{-1}}^\gamma &= \begin{cases}
      (p_{\alpha,\beta},\beta-\gamma) & \text{if $\alpha \neq 2^k$ for any $k$} \\
      (p_{\alpha,-\gamma},\beta-\gamma) & \text{if $\alpha = 2^k$ for some $k$;}
    \end{cases} \displaybreak[0]\\
    ea^\alpha {b'_{-1}}^\beta c_1^\gamma &= \begin{cases}
      (p_{\alpha,-\beta},-\beta+\gamma) & \text{if $\alpha \neq 2^k$ for any $k$} \\
      (p_{\alpha,\gamma},-\beta+\gamma) & \text{if $\alpha = 2^k$ for some $k$;}
    \end{cases} \displaybreak[0]\\
    ea^\alpha {b'_{-1}}^\beta {c'_{-1}}^\gamma &= \begin{cases}
      (p_{\alpha,-\beta},-\beta-\gamma) & \text{if $\alpha \neq 2^k$ for any $k$} \\
      (p_{\alpha,-\gamma},-\beta-\gamma) & \text{if $\alpha = 2^k$ for some $k$.}
    \end{cases}
  \end{align*}

  Let $(p_{\alpha,\delta},\zeta) \in T \times \zset$. There are eight
  cases to consider, depending on whether $\alpha = 2^k$ for some $k$;
  whether $\delta \geq 0$ or $\delta < 0$; and whether $\zeta \geq
  \delta$ or $\zeta < 0$. We do one exemplary case: $\alpha \neq 2^k$
  for any $k$, $\delta \geq 0$, and $\zeta < \delta$. Then
  \[
    ea^{\alpha}b_1^\delta {c'_{-1}}^{\delta-\zeta} = (p_{\alpha,\delta},\delta-(\delta-\zeta)) = (p_{\alpha,\delta},\zeta).
  \]
  Furthermore, checking the other cases shows that this is the unique
  element of $L_1$ representing $(p_{\alpha,\delta},\zeta)$.
\end{proof}

\begin{proposition}
  \label{prop:dirfactornocrosssection}
  The monoid $F[T]$ does not admit a regular cross-section.
\end{proposition}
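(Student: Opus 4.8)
The plan is to reduce to a statement about the action alone and then derive a contradiction by a pumping argument. By \fullref{Lemma}{lem:changegens}, since the class of regular languages is closed under homomorphism, it is enough to rule out a regular cross-section over the natural generating set $A=\set{x,y,y',z,z',p_{0,0}}$. The identities $mt=t$, $tm'=t\cdot m'$ and $tt'=t'$ (for $t,t'\in T$, $m,m'\in M$) show that a word over $A$ evaluates into $T$ precisely when it contains $p_{0,0}$, in which case its value is $p_{0,0}\cdot v$, where $v\in\set{x,y,y',z,z'}^*$ is the suffix after the \emph{last} $p_{0,0}$. So from a regular cross-section I would extract, using intersection with the words containing $p_{0,0}$ together with a left quotient (both preserving regularity), a regular language $V\subseteq\set{x,y,y',z,z'}^*$ mapping \emph{bijectively} onto $T$ via $v\mapsto p_{0,0}\cdot v$. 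This map is onto (each element of $T$ has a representative, necessarily containing $p_{0,0}$) and injective (two cross-section words with the same action-suffix would have the same value). It then suffices to show no such regular $V$ exists.

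I would next record the relevant dynamics. On any path avoiding $\Omega$, the first coordinate $\alpha$ equals the number of $x$'s, and since each $x$ may only be applied at $\beta=0$ and lands again at $\beta=0$, the second coordinate $\beta$ of the value of $v$ is produced entirely by the final segment $\tau\in\set{y,y',z,z'}^*$ of $v$ following its last $x$. The two generator pairs exchange roles with the parity of $\alpha$: for $\alpha\notin\bset$ the letters $z,z'$ fix every point of the column and $\Omega$ is never entered, whereas for $\alpha\in\bset$ the letters $y,y'$ fix $p_{\alpha,0}$ but send every $p_{\alpha,\beta}$ with $\beta\neq0$ to $\Omega$, so that $z,z'$ become the only movers.

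The core is the following pumping. Let $\mathcal A$ be a DFA for $V$ with state set $Q$, and fix a large $\alpha\in\bset$. For $\beta\in\zset$ write the representative of $p_{\alpha,\beta}$ as $w_\beta=\pi_\beta x\tau_\beta$, where $\pi_\beta x$ is the prefix through the last $x$ (so it reaches $p_{\alpha,0}$) and $\tau_\beta$ is the final segment. Pigeonholing the state of $\mathcal A$ reached after $\pi_\beta x$ gives an infinite set $S$ of $\beta$ sharing one state $r$; for $\beta,\beta'\in S$ the word $\pi_{\beta'}x\tau_\beta$ lies in $V$ and also evaluates to $p_{\alpha,\beta}$, so injectivity forces $\pi_{\beta'}x=\pi_\beta x$. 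Thus all $w_\beta$ ($\beta\in S$) share one prefix $\Pi$ ending in the last $x$, reaching $p_{\alpha,0}$ in state $r$ with $\alpha>\abs{Q}$ occurrences of $x$. Among the first $\abs{Q}+1$ post-$x$ states two coincide, yielding an $x$-containing loop of length $m\le\abs{Q}$ whose deletion gives $\Pi'$, still ending in state $r$, with $\alpha-m\notin\bset$ (no power of $2$ lies in $[\alpha-\abs{Q},\alpha-1]$ for large $\alpha$). Now take $\beta\in S$ with $\beta>\abs{Q}$ (replacing $z,z'$ by $z',z$ if the large elements of $S$ are negative) and split into two cases. If $\Pi'$ still reaches $p_{\alpha-m,0}$, then $\Pi'\tau_\beta\in V$ evaluates in the column $\alpha-m\notin\bset$; because in the original $\bset$-column $\alpha$ the letters $y,y'$ act only while $\beta=0$, after $\tau_\beta$ last visits $\beta=0$ it climbs to $\beta$ using $z,z'$ alone, so $\tau_\beta$ ends in a block of $z,z'$ of length $\ge\beta>\abs{Q}$; this block repeats a state of $\mathcal A$, and since $z,z'$ are inert in the column $\alpha-m$, inserting an extra copy of the resulting loop produces a second word of $V$ with the same value, contradicting injectivity. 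If instead deleting the loop shifts an intermediate $y$-move off $\beta=0$ and sends $\Pi'$ to $\Omega$, then every $\Pi'\tau_\beta$ ($\beta\in S$) evaluates to $\Omega$, giving infinitely many words of $V$ with value $\Omega$, again a contradiction.

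The main obstacle is that one cannot assume the cross-section chooses ``geodesic'' representatives, so the generator used to move inside a column is not directly visible. This is defeated in two strokes: injectivity of $V\to T$ forces a single prefix $\Pi$ across infinitely many $\beta$, so one $x$-loop can be pumped uniformly; and the parity constraint on where $y,y'$ may act forces, for large $\beta$, a long pumpable block of $z,z'$ at the tail of $\tau_\beta$, which becomes inert once the prefix is shifted into a column outside $\bset$. Everything else is direct verification against the action table.
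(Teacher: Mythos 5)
Your proof is correct, and it reaches the contradiction by a genuinely different route than the paper, even though the two arguments share the same endpoints. Both begin by reducing to a regular language of words in $\set{x,y,y',z,z'}^*$ acting on $p_{0,0}$ that maps bijectively onto (essentially) $T$ --- you via intersection and left quotient, the paper via a GSM mapping; your slightly smaller generating set $\set{x,y,y',z,z',p_{0,0}}$ is legitimate since $\Omega=p_{0,0}\cdot yx$ lies in the orbit of $p_{0,0}$. Both also end the same way: pump the $x$-block so that the column index leaves $\bset$, making $z,z'$ inert, then pump a long $\set{z,z'}$-block to violate uniqueness. The difference is the middle. The paper performs three further GSM normalizations, deleting provably inert segments, until every representative has the canonical shape $p_{0,0}x^*\set{y,y'}^*\set{z,z'}^*$; after that the endgame is immediate and can never hit $\Omega$, because the $\set{y,y'}$-block is read in a column outside $\bset$. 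You keep arbitrary representatives and replace normalization by two observations: (i) uniqueness of representatives plus a pigeonhole on DFA states forces infinitely many representatives $w_\beta$ of $\gset{p_{\alpha,\beta}}{\beta \in \zset}$ to share one prefix $\Pi$ through the last $x$, so a single $x$-loop can be deleted uniformly across all of them; and (ii) since in a column of $\bset$ the letters $y,y'$ may act only at $\beta=0$, each $\tau_\beta$ must end in a $\set{z,z'}$-block of length at least $\abs{\beta}$, supplying the pumpable inert block. The price of skipping normalization is your additional case where the shortened prefix $\Pi'$ evaluates to $\Omega$, which you dispatch correctly: $\Omega$ would then acquire infinitely many representatives. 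What each approach buys: the paper confines the delicacy to routine (if tedious) GSM constructions and gets a clean final pumping; yours needs no GSM machinery at all and the prefix-rigidity trick in (i) is an elegant substitute for normalization, at the cost of a sharper case analysis. One wording quibble: your ``loop of length $m\le\abs{Q}$'' should say ``loop containing $m\le\abs{Q}$ occurrences of $x$'' --- the loop itself may be arbitrarily long; the argument needs only the bound on the number of deleted $x$'s, so that $\alpha-m\notin\bset$ for $\alpha\in\bset$ sufficiently large.
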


\begin{proof}
  Suppose for \textit{reductio ad absurdum} that $F[T]$ admits a regular cross-section. Then by
  \fullref{Lemma}{lem:changegens}, $F[T]$ admits a regular cross-section $L$ over
  $A = \set{x,y,y',z,z',p_{0,0},\Omega}$, since $A$ generates $F[T]$ (because $A$ includes a generating set for $F$ and
  and the image of $p_{0,0}$ under the action of $F$ is the whole of $T$). Since regularity is preserved on replacing a
  single word, assume that $\Omega \in L$.

  For any word $w$ over $A$, one of the following three cases holds:
  \begin{enumerate}
  \item $p_{0,0}$ and $\Omega$ do not appear in $w$, in which case $w$ represents an element of $F$;
  \item there is an occurrence of $\Omega$ in $w$ such that there is no later occurrence of $p_{0,0}$ in $w$, in which
    case $w$ represents $\Omega$;
  \item there is an occurrence of $p_{0,0}$ in $w$ such that there is no later occurrence of $\Omega$ in $w$, in which
    case $w$ represents the same element of $F[T]$ as the suffix of $w$ starting with the last occurrence of $p_{0,0}$.
  \end{enumerate}

  Let $L^{(1)}$ be the language consisting of suffixes of words in $L$ that lie in $p_{0,0}\set{x,y,y',z,z'}^*$. Note
  that $L^{(1)}$ can be obtained by applying a GSM mapping to $L$, and so $L^{(1)}$ is regular. (The required GSM
  initially reads input symbols without producing output. It non-deterministically chooses some point at which to
  produce output, and subsequently outputs each symbol it reads, checking that the suffix after the chosen point is of
  the required form.) Note also that $\Omega \in L \setminus L^{(1)}$. All words in $L$ that have suffixes in $L^{(1)}$
  satisfy case~3 above. Thus the language $L^{(1)}$ must map bijectively to $T \setminus \set{\Omega}$.

  Let $p_{0,0}v \in L^{(1)}$. Suppose that $v = v'xwxv''$, where $w \in \set{y,y',z,z'}^*$. Then $v'x$ must send
  $p_{0,0}$ to some $p_{\alpha,0}$ (for the symbol $x$ would send any other point to $\Omega$, contradicting the fact
  that $\Omega \in L$ is a unique representative). Further, if $p_{\alpha,0} \cdot w = p_{\alpha,\beta}$ for
  $\beta \neq 0$ (note that the action of $w$ cannot alter $\alpha$), then
  $p_{0,0}\cdot v = p_{\alpha,\beta} \cdot xv'' = \Omega$, which is again a contradiction. Hence
  $p_{\alpha,0} \cdot w = p_{\alpha,0}$. Thus deleting the subword $w$ from $v$ does not alter the represented
  element. Similar reasoning applies if $v = wxv''$ for some $w \in \set{y,y',z,z'}^*$; again $w$ can be deleted without
  altering the represented element. Apply a GSM mapping to $L^{(1)}$ that deletes any string of symbols from
  $\set{y,y',z,z'}$ between $p_{0,0}$ and $x$ or between two symbols $x$; this yields a regular language
  $L^{(2)} \subseteq p_{0,0}x^*\set{y,y',z,z'}$ mapping bijectively to $T \setminus \set{\Omega}$. (The required GSM
  initially outputs each symbol it reads. On encountering a symbol $p_{0,0}$ or $x$, it non-deterministically guesses
  whether there now follows a string of symbols from $\set{y,y',z,z'}$ followed by another $x$. If it guesses `yes', it
  checks whether what follows is a string of this form, producing no output until it reaches the $x$, when it resumes
  outputting each symbol it reads. If it guesses `no', it continues to output each symbol it reads, checking that what
  follows is not a string of the given form.)

  Now let $p_{0,0}v \in L^{(2)}$. Suppose that $v = v'ywyv''$ for $w \in \set{z,z'}$ and that
  $p_{0,0}\cdot v'y = p_{\alpha,\beta}$. If $\alpha \notin \bset$, then $p_{\alpha,\beta}\cdot w = p_{\alpha,\beta}$ and
  so deleting $w$ does not alter the represented element. So assume $\alpha \in \bset$. Then
  $p_{0,0}\cdot v' = p_{0,0}\cdot v'y = p_{\alpha,0}$, since otherwise the action of $y$ would lead to
  $\Omega$. Similarly, $p_{0,0}\cdot v'yw = p_{0,0}\cdot v'ywy = p_{\alpha,0}$. So $w$ fixes $p_{0,0}\cdot v'y$ and so
  deleting $w$ does not alter the represented element.

  The same reasoning applies if one replaces either or both of the distinguished symbols $y$ by $y'$. Apply a GSM
  mapping to $L^{(2)}$ that deletes any string of symbols from $\set{z,z'}$ between two symbols from $\set{y,y'}$; this
  yields a regular language $L^{(3)} \subseteq p_{0,0}x^*\set{z,z'}^*\set{y,y'}^*\set{z,z'}^*$ that maps bijectively to
  $T \setminus \set{\Omega}$. (The required GSM functions similarly to the last-described one.)

  Let $p_{0,0}v \in L^{(3)}$. Suppose that $v = x^\alpha wyv''$ for $w \in \set{z,z'}^*$. Suppose
  $p_{0,0}\cdot x^\alpha w = p_{\alpha,\beta}$. Then $\beta = 0$, for otherwise the action of $y$ would lead to
  $\Omega$. Hence $w$ fixes $p_{0,0}\cdot x^\alpha$ and so deleting $w$ does not alter the represented element. The same
  reasoning applies if one replaces the distinguished symbol $y$ by $y'$.  Apply a GSM mapping to $L^{(3)}$
  that deletes any string of symbols from $\set{z,z'}$ between $p_{0,0}$ and a symbol from $\set{y,y'}$; this yields a
  regular language $L^{(4)} \subseteq p_{0,0}x^*\set{y,y'}^*\set{z,z'}^*$ that maps bijectively to
  $T \setminus \set{\Omega}$. (Again, the required GSM functions similarly to the last-described one.)

  Let $n$ be larger than the number of states in an automaton recognizing $L^{(4)}$. Consider $p_{2^n,n} \in T$. Let
  $p_{0,0}x^{2^n}vw \in L^{(4)}$, where $v \in \set{y,y'}^*$ and $w \in \set{z,z'}^*$, be the unique word representing
  $p_{2^n,n}$. Then since at least $n$ symbols $z$ are required to reach $p_{2^n,n}$ from $p_{2^n,0}$, the word $w$ has
  length at least $n$.

  By the pumping lemma, $p_{0,0}x^{2^n-k}vw \in L^{(4)}$ for some $k$ such that $0 < k < n$. Let
  $p_{\alpha,\beta} = p_{0,0}x^{2^n-k}v$; then $\alpha = 2^n - k \notin \bset$, so that $z$ and $z'$ fix
  $p_{\alpha,\beta}$. Since $w$ has length at least $n$, it factors as $w = pqr$ such that
  $p_{0,0}x^{2^n-k}vpq^ir \in L^{(4)}$ for all $i \in \nset \cup \set{0}$. But $z$ and $z'$ fix $p_{\alpha,\beta}$, so
  all these words represent $p_{\alpha,\beta}$, which contradicts the fact that $L^{(4)}$ maps bijectively onto
  $T \setminus \set{\Omega}$.
\end{proof}

Combining \fullref{Propositions}{prop:dirprodcrosssection} and \ref{prop:dirfactornocrosssection} yields the following result:

\begin{theorem}
  \begin{enumerate}
  \item The class of monoids with regular cross-sections is not closed under taking direct factors.
  \item The class of Markov monoids is not closed under taking direct factors.
  \end{enumerate}
\end{theorem}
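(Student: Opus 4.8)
The plan is to read off both statements directly from \fullref{Propositions}{prop:dirprodcrosssection} and \ref{prop:dirfactornocrosssection}, since those two results between them already isolate all of the work. The witness monoid in each case is the direct product $M = F[T] \times \zset$, of which $F[T]$ is a direct factor by construction of the direct product. The entire argument is then a formal combination, so I expect no genuine obstacle to remain at this stage; the difficulty has all been discharged into the two propositions.

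For part~(1), I would argue as follows. \fullref{Proposition}{prop:dirprodcrosssection} furnishes a regular cross-section for $M$, so $M$ lies in the class of monoids with regular cross-sections. On the other hand, \fullref{Proposition}{prop:dirfactornocrosssection} shows that $F[T]$ admits no regular cross-section. Since $F[T]$ is a direct factor of $M$, this exhibits a monoid in the class one of whose direct factors lies outside it, establishing that the class is not closed under taking direct factors.

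For part~(2), the same example works, and the only additional observation required is that the cross-section produced in \fullref{Proposition}{prop:dirprodcrosssection} is not merely regular but \emph{prefix-closed}, so that $M$ is a Markov monoid by the definition recalled in \fullref{Subsection}{subsec:monoidactions}. Its direct factor $F[T]$ admits no regular cross-section at all, and hence \emph{a fortiori} no prefix-closed regular cross-section; thus $F[T]$ is not Markov, and the class of Markov monoids is likewise not closed under direct factors.

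The only points I would check explicitly, lest the formal combination conceal a gap, are that $F[T]$ really is a direct factor of $M$ (immediate from the definition of the direct product $F[T] \times \zset$) and that ``prefix-closed regular'' implies ``regular,'' so that the negative conclusion of \fullref{Proposition}{prop:dirfactornocrosssection} genuinely rules out Markovness and not merely the existence of an arbitrary regular cross-section. Both are trivial, confirming that there is no remaining hard step: the construction of $F[T]$ and the analysis of its Cayley graph in the two propositions carry the whole weight of the theorem.
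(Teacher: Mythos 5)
Your proposal is correct and is precisely the paper's own argument: the paper derives the theorem by the single sentence ``Combining Propositions~\ref{prop:dirprodcrosssection} and~\ref{prop:dirfactornocrosssection} yields the following result,'' which is exactly your combination, with $F[T]\times\zset$ as the Markov (prefix-closed regular cross-section) monoid and its direct factor $F[T]$ admitting no regular cross-section at all. One trivial slip: the definition of a Markov monoid is recalled in the subsection on languages and cross-sections, not in \fullref{Subsection}{subsec:monoidactions}.
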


However, $F[T]$ has a cross-section in the next natural language class that strictly contains the regular languages:

\begin{proposition}
  $F[T]$ has a one-counter cross-section.
\end{proposition}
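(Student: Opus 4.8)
The plan is to write down an explicit cross-section and then observe that it lies in the one-counter class. The elements of $F[T]$ split as a disjoint union $F \sqcup P \sqcup \set{\Omega}$, so it suffices to give pairwise disjoint languages mapping bijectively onto each part. I would work over the generating set $A = \set{x,y,y',z,z',p_{0,0},\Omega}$ used in the proof of \fullref{Proposition}{prop:dirfactornocrosssection}, which generates $F[T]$. Each element of $F$ is represented by itself, giving the regular language $\set{x,y,y',z,z'}^*$, and $\Omega$ is represented by the single letter $\Omega$. The only real content is in representing $P$.

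The key observation — exactly what lets us beat the obstruction of \fullref{Proposition}{prop:dirfactornocrosssection} — is that one can move the second index without knowing whether $\alpha$ is a power of $2$. A short computation with the action shows that, for every $\alpha \ge 0$ and $\beta \ge 1$,
\[
  p_{0,0}\,x^\alpha y^\beta z^\beta = p_{\alpha,\beta},
  \qquad
  p_{0,0}\,x^\alpha {y'}^\beta {z'}^\beta = p_{\alpha,-\beta},
\]
while $p_{0,0}x^\alpha = p_{\alpha,0}$. Indeed $x^\alpha$ sends $p_{0,0}$ to $p_{\alpha,0}$; then if $\alpha \notin \bset$ the letters $y,y'$ do the counting while $z,z'$ act trivially, whereas if $\alpha \in \bset$ the letters $y,y'$ fix $p_{\alpha,0}$ while $z,z'$ do the counting. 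Either way the matched block $y^\beta z^\beta$ raises the second index by exactly $\beta$. Hence I would set
\[
  L_P = p_{0,0}x^* \cup \gset{p_{0,0}x^\alpha y^\beta z^\beta}{\alpha \ge 0,\ \beta \ge 1} \cup \gset{p_{0,0}x^\alpha {y'}^\beta {z'}^\beta}{\alpha \ge 0,\ \beta \ge 1}.
\]
This maps bijectively onto $P$: the number of $x$'s fixes $\alpha$, the matched block fixes $\beta$, and the three pieces are disjoint since they are told apart by whether a $y$, a $y'$, or nothing follows the $x$-block.

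Finally I would verify that $L = \set{x,y,y',z,z'}^* \cup \set{\Omega} \cup L_P$ is one-counter. Grouping the sub-piece $p_{0,0}x^*$ with the first two summands yields a regular language $R$, so that $L = R \cup L'$, where $L'$ consists of the two matched-block pieces of $L_P$. The language $L'$ is recognised by a single one-counter automaton that idles on the $x$'s, then on reading a $y$ (resp.\ $y'$) increments the counter on each $y$ (resp.\ $y'$), decrements it on each $z$ (resp.\ $z'$), and accepts with empty counter; this is just the standard one-counter language $\set{a^n b^n}$ carrying a regular prefix and a branching choice, so $L'$ is one-counter. Since the one-counter languages form a full trio, closure under union with a regular language (\fullref{Lemma}{lem:fulltrioclosureunionreg}) gives that $L = R \cup L'$ is one-counter. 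The main, though modest, obstacle is spotting the uniform-increment identity $y^\beta z^\beta$; once it is in hand, the matching condition it imposes is precisely what a single counter checks, and the remaining bijectivity and closure bookkeeping is routine.
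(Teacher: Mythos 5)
Your proof is correct and takes essentially the same approach as the paper: the same normal forms $p_{0,0}x^\alpha y^\beta z^\beta$ and $p_{0,0}x^\alpha (y')^\beta (z')^\beta$, justified by the same key observation that a matched block shifts the second index by $\beta$ regardless of whether $\alpha \in \bset$. If anything, your version is slightly more careful than the paper's, which omits an explicit representative for $\Omega$ (your singleton $\set{\Omega}$) and asserts the one-counter property without the closure argument you spell out.
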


\begin{proof}
  Let $A = \set{x,y,y',z,z',p_{0,0},\Omega}$ and let $L$ be the one-counter language
  \[
    \set{x,y,y',z,z'}^* \cup p_{0,0}x^*\gset{y^nz^n,(y')^n(z')^n}{n \in \nset}.
  \]
  Clearly, $\set{x,y,y',z,z'}^*$ maps bijectively onto $F$. Further, $p_{0,0}\cdot x^\alpha y^nz^n = p_{\alpha,n}$ and
  $p_{0,0}\cdot x^\alpha (y')^n(z')^n = p_{\alpha,-n}$ (regardless of whether $\alpha \in \bset$), and so
  $p_{0,0}x^*\gset{y^nz^n,(y')^n(z')^n}{n \in \nset}$ maps bijectively onto $T$.
\end{proof}

\section{Cross-sections in different classes of languages}

This section is dedicated to showing that we can `separate' two full trios, one not contained in the other, using monoid
cross-sections, and applying this result to some particular interesting language classes.

\begin{theorem}
  \label{thm:cd}
  Let $\mathfrak{C}$ and $\mathfrak{D}$ be full trios such that $\mathfrak{C}$ is not a subclass of $\mathfrak{D}$.
  Then there is a monoid with a cross-section in $\mathfrak{C}$ but no cross-section in $\mathfrak{D}$.
\end{theorem}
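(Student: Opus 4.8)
The plan is to fix a witness language and encode it rigidly as the (essentially forced) cross-section of a monoid built by the construction of \fullref{Subsection}{subsec:monoidactions}. Since $\mathfrak{C} \not\subseteq \mathfrak{D}$, choose $L \in \mathfrak{C} \setminus \mathfrak{D}$ over a finite alphabet $A$. As every regular language lies in every full trio, $L$ is non-regular; in particular $L \neq \emptyset$ and $L \neq A^*$. Let $\#$ be a new symbol not in $A$, put $F = (A \cup \set{\#})^*$, and let $F$ act on
\[
T = \mathrm{Pref}(L) \cup \gset{\top_u}{u \in L} \cup \set{\Omega},
\]
where $\mathrm{Pref}(L)$ is the set of prefixes of words of $L$ regarded as the states of the prefix \emph{tree} (one state per prefix), the $\top_u$ are distinct ``accepting'' states, and $\Omega$ is a sink. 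The generators act by $u \cdot a = ua$ if $ua \in \mathrm{Pref}(L)$ and $u \cdot a = \Omega$ otherwise (for $a \in A$); by $u \cdot \# = \top_u$ if $u \in L$ and $u \cdot \# = \Omega$ otherwise; and by sending every $\top_u$ and $\Omega$ to $\Omega$. Let $M = F[T]$, let $t_0 \in T$ be the state $\emptyword \in \mathrm{Pref}(L)$, and let $e$ be a generator naming $t_0$; then (using $L \neq A^*$ to reach $\Omega$) the set $A \cup \set{\#,e}$ generates $M$. The decisive feature is that, because $T$ is a \emph{tree} and not a minimal automaton, each state is reached from $t_0$ by a \emph{unique} word of $F$: the prefix state $u$ only by $u$, and $\top_u$ only by $u\#$.

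For the positive half I would exhibit the cross-section
\[
K = (A\cup\set{\#})^* \cup \gset{eu}{u \in \mathrm{Pref}(L)} \cup \gset{eu\#}{u \in L} \cup \set{r_\Omega},
\]
where $r_\Omega$ is any fixed word naming $\Omega$. One checks directly that $K$ meets each element of $M$ exactly once: the first block bijects with $F$, the second with $\mathrm{Pref}(L)$, the third with the $\top_u$, and $r_\Omega$ names $\Omega$. To see $K \in \mathfrak{C}$, note that the middle two blocks together form the image of $L$ under a single GSM mapping (on input $w$, either output $e$ followed by a prefix of $w$ and discard the rest, or output $ew\#$), hence lie in $\mathfrak{C}$; the remaining two blocks are regular, so \fullref{Lemma}{lem:fulltrioclosureunionreg} gives $K \in \mathfrak{C}$.

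For the negative half, suppose for contradiction that $M$ has a cross-section in $\mathfrak{D}$. By \fullref{Lemma}{lem:changegens} applied to the full trio $\mathfrak{D}$ there is such a cross-section $K'$ over $A \cup \set{\#,e}$. Any word over this alphabet containing $e$ represents the $T$-element $t_0 \cdot s$, where $s \in (A\cup\set{\#})^*$ is the suffix following the last $e$ (everything to the left collapses, since $mt = t$ and $ty = y$), while words without $e$ represent elements of $F$. Applying the GSM that outputs this suffix, exactly as in the proof of \fullref{Proposition}{prop:dirfactornocrosssection}, yields a language $V \in \mathfrak{D}$ consisting of one suffix per element of $T$. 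Here the rigidity pays off: unique reachability in the tree forces the suffix for $u \in \mathrm{Pref}(L)$ to be exactly $u$ and that for $\top_u$ to be exactly $u\#$, so with $L\# = \gset{u\#}{u\in L}$ and a single suffix $s_\Omega$ naming $\Omega$,
\[
\mathrm{Pref}(L) \cup L\# \;\subseteq\; V \;\subseteq\; \mathrm{Pref}(L) \cup L\# \cup \set{s_\Omega}.
\]
Intersecting $V$ with the regular language $A^*\#$ kills $\mathrm{Pref}(L)$ and keeps $L\#$ (possibly plus $s_\Omega$); applying the homomorphism $\# \mapsto \emptyword$ then gives $Y \in \mathfrak{D}$ with $L \subseteq Y \subseteq L \cup \set{w_0}$ for a single word $w_0 \in A^*$. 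Finally $Y \cap (A^* \setminus (Y \setminus L)) = L$, an intersection with a cofinite (hence regular) language, so $L \in \mathfrak{D}$, contradicting the choice of $L$.

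The main obstacle is precisely the rigidity invoked in the third paragraph: a cross-section may name each element by a wholly non-canonical word, so nothing pins down the representatives unless the action itself forces them. Using the full prefix tree rather than the minimal automaton of $L$ is exactly what guarantees unique reachability and thus fixes the extracted suffixes; the one loose end, the uncontrolled representative of $\Omega$, contributes at most one stray word and is eliminated by the final intersection with a cofinite regular language.
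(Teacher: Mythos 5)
Your proposal is correct and is essentially the paper's own argument: pick a witness $L \in \mathfrak{C}\setminus\mathfrak{D}$, encode it via the construction of \fullref{Subsection}{subsec:monoidactions} into a monoid $F[T]$ whose action is a tree with a marker letter certifying membership in $L$, exhibit the evident cross-section for the positive half, and for the negative half combine \fullref{Lemma}{lem:changegens}, a GSM extracting the suffix after the last occurrence of the letter naming the base point, unique reachability in the tree, and trio closure operations to force $L \in \mathfrak{D}$, a contradiction. The differences from the paper are cosmetic: the paper uses states $p_u$ for \emph{all} $u \in B^*$ rather than just the prefixes of the witness language, so its middle block $p_\emptyword B^*$ is regular and its block $p_\emptyword K z$ is obtained by an \emph{inverse} GSM mapping; and it normalises the representative of $\Omega$ at the outset by swapping a single word, whereas you dispose of $\Omega$'s stray representative at the end by intersecting with a cofinite regular language. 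Both of your variants work, but your choice of the prefix tree creates one small slip in the positive half: if $\emptyword \in L$, a GSM can produce only empty output on the empty input word, so $e\#$, the required representative of $\top_\emptyword$, is missing from the claimed GSM image of $L$. This is patched in one line: either adjoin the regular singleton $\set{e\#}$ using \fullref{Lemma}{lem:fulltrioclosureunionreg}, or assume from the start that $\emptyword \notin L$, replacing $L$ by $L \cap A^+$, which stays in $\mathfrak{C}$ by closure under intersection with regular languages and stays outside $\mathfrak{D}$ by \fullref{Lemma}{lem:fulltrioclosureunionreg}.
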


\begin{proof}
  Since $\mathfrak{C}$ is not a subclass of $\mathfrak{D}$, there is a language $K$ over some finite alphabet $B$ that
  is in $\mathfrak{C}$ but not in $\mathfrak{D}$. Since full trios always contain the empty language, $K$ is thus
  non-empty. Let
  \[
    T = \gset{p_u}{u \in B^*} \cup \gset{q_u}{u \in K} \cup \set{\Omega}.
  \]
  Let $z$ be a new symbol not in $B$, let $F$ be the free monoid on $B \cup \set{z}$, and let $F$ act on $T$ as follows
  \begin{align*}
    p_u \cdot b &= p_{ub} &&\text{for $u \in  B^*$ and $b \in B$;} \\
    p_u \cdot z &= \begin{cases}
      q_u & \text{if $u \in K$,} \\
      \Omega & \text{if $u \notin K$,}
    \end{cases} &&\text{for $u \in B^*$;} \\
    q_u \cdot x &= \Omega && \text{for $u \in K$ and $x \in B \cup \set{z}$;} \\
    \Omega \cdot x &= \Omega && \text{for $x \in B \cup \set{z}$.}
  \end{align*}
  The aim is now to show that the monoid $F[T]$ has a cross-section in $\mathfrak{C}$ but not in $\mathfrak{D}$.

  Let $A = B \cup \set{z,p_\emptyword,\Omega}$; note that $A$ is a generating set for $F[T]$. Let
  \[
    L = \parens{B\cup\set{z}}^* \cup p_\emptyword B^* \cup p_\emptyword K z.
  \]
  Deletion of a fixed prefix $p_\emptyword$ and a fixed suffix $z$ can be performed by a GSM mapping. Since full trios
  are closed under inverse GSM mappings, and since $K$ is in $\mathfrak{C}$, it follows that $p_\emptyword K z$ is also
  in $\mathfrak{C}$. Hence, by the closure of full trios under the operation of union with a regular language
  (\fullref{Lemma}{lem:fulltrioclosureunionreg}), $L$ is in $\mathfrak{C}$. Finally, it is clear that $L$ is a
  cross-section for $F[T]$ since the sublanguage $\parens{B\cup\set{z}}^*$ maps bijectively onto $F$, the sublanguage
  $p_\emptyword B^*$ maps bijectively onto $\gset{p_u}{u \in B^*}$, and the sublanguage $p_\emptyword K z$ maps
  bijectively onto $\gset{q_u}{u \in K}$, and these three sublanguages, whose union is $L$, are disjoint.

  Now suppose for \textit{reductio ad absurdum} that $F[T]$ admits a cross-section in $\mathfrak{D}$. By
  \fullref{Lemma}{lem:changegens}, assume without loss of generality that the cross-section is a language $L$ over
  $B \cup \set{z,p_\emptyword,\Omega}$. Since replacing a single word in $L$ can be carried out via intersection with a
  regular language and then union with a (one-element) regular language (see
  \fullref{Lemma}{lem:fulltrioclosureunionreg}), assume that $\Omega \in L$.

  Let $L'$ be the language consisting of suffixes of words in $L$ that lie in $p_\emptyword v$, where
  $v \in (B \cup \set{z})^*$. Note that $L'$ can be obtained by applying a GSM mapping to $L$, and so
  $L' \in \mathfrak{D}$. (The GSM that extracts the required suffixes functions similarly to the one in the proof of
  \fullref{Proposition}{prop:dirfactornocrosssection}.) By definition of multiplication in $F[T]$, and noting that
  $\Omega \in L \setminus L'$, the words in $L'$ must map bijectively to $T \setminus \set{\Omega}$. By definition of
  the action of $F$ on $T$, the words in $L'$ that map bijectively to $\gset{q_u}{u \in K}$ are precisely those of the
  form $p_\emptyword u z$, where $u \in K$. Intersection with a regular language gives the language of the words $u$
  such that $p_\emptyword uz \in L'$, showing that $K \in \mathfrak{D}$. But this contradicts the fact that
  $K \notin \mathfrak{D}$. Hence $F[T]$ does not have a cross-section in $\mathfrak{D}$.
\end{proof}

Each of the following language classes forms a full trio: the indexed languages \cite[Theorem~3.2]{aho_indexed}; ET0L
languages \cite[Theorem 18]{rozenberg_extension}; the tree-adjoining languages (TAL) \cite[p.~59]{kallmeyer_parsing};
context-free languages (CFL) \cite[p.~278]{hopcroft_automata}.

Every ET0L language is indexed \cite[Corollary~4.1]{culik_onsomefamilies}, and the class of TALs is equivalent to
the languages recognised by a restricted type of indexed grammars called linear indexed grammars
\cite[p.~72]{kallmeyer_parsing}, hence the TALs are also indexed. The class of context-free languages is contained in
both the ET0L languages \cite{rozenberg_extension} and in the TALs \cite[p.~58]{kallmeyer_parsing}. (These containments
are illustrated in \fullref{Figure}{fig:containments}.)

\begin{figure}[t]
  \begin{tikzpicture}[scale=1.5]
    \node (indexed) at (0,1) {Indexed};
    \node (tal) at (-1,0) {ET0L};
    \node (et0l) at (1,0) {TAL};
    \node (talcapet0l) at (0,-1) {$\text{ET0L} \cap \text{TAL}$};
    \node (cfl) at (0,-2) {CFL};
    \begin{scope}[every node/.style={font=\scriptsize}]
      \draw (indexed) edge node[left] {$L_1$} (tal);
      \draw (indexed) edge node[right] {$L_2$} (et0l);
      \draw (tal) edge node[left] {$L_2$} (talcapet0l);
      \draw (et0l) edge node[right] {$L_1$} (talcapet0l);
      \draw (talcapet0l) edge node[auto] {$L_3$} (cfl);
    \end{scope}
  \end{tikzpicture}
  \caption{Containment of selected classes of languages.}
  \label{fig:containments}
\end{figure}
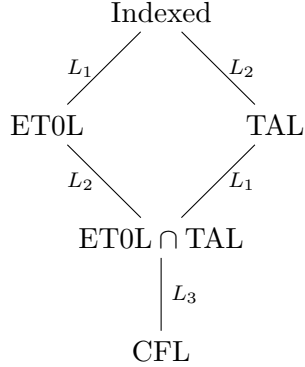

The aim is now to show that, for each containment $\mathfrak{D}\subsetneq \mathfrak{C}$ in
\fullref{Figure}{fig:containments}, there is a monoid with a cross-section in $\mathfrak{C}$ but that does not admit a
cross-section in $\mathfrak{D}$. By \fullref{Theorem}{thm:cd}, it suffices to exhibit a language that is in
$\mathfrak{C}$ but not in $\mathfrak{D}$.

\begin{corollary}
  There exists a monoid that admits a TAL (and hence indexed) cross-section but does not admit an ET0L cross-section.
\end{corollary}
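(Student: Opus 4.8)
The plan is to apply \fullref{Theorem}{thm:cd} directly, with $\mathfrak{C}$ the class of tree-adjoining languages and $\mathfrak{D}$ the class of ET0L languages. Both are full trios, as recorded in the list immediately preceding \fullref{Figure}{fig:containments}, so the hypothesis of the theorem holds as soon as we know that $\mathfrak{C}$ is not a subclass of $\mathfrak{D}$. Hence the entire corollary reduces to producing a single witness language $L_1$ that is a TAL but not an ET0L language: \fullref{Theorem}{thm:cd} then supplies a monoid $F[T]$ with a cross-section that is a TAL and no cross-section that is ET0L. Because every TAL is an indexed language (via the equivalence of TALs with linear indexed grammars, which are a special case of indexed grammars, as noted after \fullref{Figure}{fig:containments}), the TAL cross-section produced is automatically an indexed cross-section, which is what the parenthetical ``and hence indexed'' refers to.

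Everything therefore comes down to exhibiting $L_1$ and verifying its two membership properties. Membership $L_1 \in \mathfrak{C}$ I would establish by writing down an explicit tree-adjoining (equivalently, linear indexed) grammar for $L_1$; once a candidate is fixed this is a finite, mechanical check. Non-membership $L_1 \notin \mathfrak{D}$ I would establish through the structural theory of ET0L systems: one assumes an ET0L system generating $L_1$ and derives a contradiction by exhibiting an admissible iteration --- a simultaneous pumping of boundedly many factors, forced by the global, parallel selection of a table at each derivation step --- that yields a word lying outside $L_1$.

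The hard part is the second property, and in particular the choice of $L_1$, because ET0L is remarkably expressive among the languages in play. The standard marker-and-table technique already produces ET0L systems for every equal-block language $\{a_1^n \cdots a_k^n : n \ge 0\}$, for the copy language $\{ww : w \in \{a,b\}^*\}$ (correlate the two copies by choosing, at each step, a single table that appends the same letter to both halves), and even for crossing-dependency languages such as $\{a^m b^n c^m d^n : m,n \ge 0\}$ (use one table to grow the $(a,c)$-pair in lock-step, another for the $(b,d)$-pair, and a clean-up table to erase the markers). Consequently $L_1$ cannot be any of these familiar non-context-free TALs; it must instead exploit some feature of tree-adjoining derivations that no finite, globally-selected system of parallel rewriting tables can reproduce. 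I would therefore take $L_1$ from the L-systems literature, where semilinear languages provably outside ET0L are available, check by inspection that the chosen language is also generated by an explicit TAG, and feed it into \fullref{Theorem}{thm:cd}.
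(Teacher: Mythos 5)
Your reduction is the same as the paper's---both proofs consist of invoking \fullref{Theorem}{thm:cd} with $\mathfrak{C}$ the TALs and $\mathfrak{D}$ the ET0L languages, so everything hinges on exhibiting one language that is a TAL but not ET0L. But that is exactly where your proposal stops being a proof: you never produce the witness. ``Take $L_1$ from the L-systems literature\ldots and check by inspection that it is generated by an explicit TAG'' is a research plan, not an argument; no specific language is named, no TAG is given, and no non-ET0L proof is cited or sketched for any concrete language. Since the reduction itself is trivial (both classes being full trios is recorded in the paper), the witness \emph{is} the entire mathematical content of this corollary, and it is missing. Your structural instincts are sound---you correctly observe that the obvious candidates fail because ET0L systems handle copy languages, equal-block languages and crossing dependencies, and indeed that is why the witness is delicate---but recognising the difficulty is not the same as resolving it. There is also a mild tension in your plan: TALs have constant growth, so any witness must be ``tame'' in exactly the way that makes most classical non-ET0L examples (which tend to have non-semilinear structure) unusable; your proposal gives no indication of how to thread that needle.

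For comparison, the paper threads it as follows: start from a language $L$ that is \emph{context-free but not EDT0L} (such languages exist by Ehrenfeucht and Rozenberg \cite{ehrenfeucht_onsome}), let $\phi$ prime the alphabet, and set $L_1 = \gset{w(w\phi)^\rev}{w \in L}$. The mirrored-copy construction is the key trick: by \cite[Corollary~3.6]{duske_linear}, $L_1$ is generated by a linear indexed grammar (the index stack records $w$ while a context-free derivation of $L$ runs, then unwinds to emit $(w\phi)^\rev$), so $L_1$ is a TAL; and by the Ehrenfeucht--Rozenberg relationship theorem \cite{ehrenfeucht_relationship}, if $L_1$ were ET0L then $L$ would be EDT0L, contradicting the choice of $L$. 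So the non-ET0L-ness is not proved by a direct pumping argument on tables, as you propose, but is transferred from the (known) non-EDT0L-ness of a context-free language through the mirrored copy. To repair your proposal you would need either to reproduce this construction or to name and verify some other concrete TAL outside ET0L.
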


\begin{proof}
  Let $L\subseteq X^*$ be any context-free but non-EDT0L language (the EDT0L languages are a proper subclass of the ET0L
  languages (note the `D' in the name of the subclass); for the existence of context-free non-EDT0L languages, see
  \cite{ehrenfeucht_onsome}).  Let $X'$ be a copy of $X$ and $\phi:X^*\rightarrow (X')^*$ the homomorphism defined by
  $x\phi = x'$.  Then the language $L_1 = \gset{w (w\phi)^\rev }{ w\in L }$ is generated by a linear indexed grammar
  \cite[Corollary~3.6]{duske_linear} (note that the notion of linear indexed grammar in \cite{duske_linear} is more
  restricted than the one in \cite{kallmeyer_parsing} equivalent to tree-adjoining grammars) and is hence a TAL and
  therefore also indexed.  However, $L_1$ is not ET0L \cite{ehrenfeucht_relationship}. The result follows by
  \fullref{Theorem}{thm:cd}.
\end{proof}

\begin{corollary}
  There exists a monoid that admits an ET0L (and hence indexed) cross-section but admits no TAL cross-section.
\end{corollary}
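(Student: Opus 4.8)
The plan is to invoke \fullref{Theorem}{thm:cd} with $\mathfrak{C}$ the class of ET0L languages and $\mathfrak{D}$ the class of tree-adjoining languages. Both of these are full trios, as recorded immediately after the theorem, so the hypotheses will be satisfied as soon as I produce a single language lying in the ET0L class but outside the TAL class. The theorem then manufactures a monoid with an ET0L cross-section and no TAL cross-section, and the containment of the ET0L languages in the indexed languages (cited above) supplies the parenthetical ``(and hence indexed)''.

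For the separating language I would take the unary exponential language $L_2 = \gset{a^{2^n}}{n \geq 0}$. To see that it is ET0L, I would exhibit it as a D0L language: take the one-letter alphabet $\set{a}$, axiom $a$, and the single deterministic production $a \mapsto aa$. Iterating this homomorphism from the axiom produces exactly the words $a, a^2, a^4, \ldots$, so the resulting D0L language is precisely $L_2$. Since D0L languages are ET0L, this settles membership in $\mathfrak{C}$ with no real work.

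The substantive step, and the one I expect to be the main obstacle, is showing $L_2 \notin \mathfrak{D}$. Here I would appeal to the fact that every tree-adjoining language is semilinear (and in particular satisfies the constant-growth property), which is a standard structural property of TALs and can be quoted from the literature on mildly context-sensitive formalisms. The Parikh image of $L_2$ is the set $\gset{2^n}{n \geq 0} \subseteq \nset$, which is not eventually periodic and hence not semilinear (a semilinear subset of $\nset$ is a finite union of arithmetic progressions). Therefore $L_2$ cannot be a TAL. This is exactly the point at which the two classes genuinely diverge: indexed and ET0L languages need not be semilinear, whereas TALs must be.

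Combining these observations gives $L_2 \in \text{ET0L} \setminus \text{TAL}$, so ET0L is not a subclass of TAL, and \fullref{Theorem}{thm:cd} delivers the required monoid. The only care needed is to cite a reference for the semilinearity of the tree-adjoining languages; every other step is routine, and in particular the ET0L membership of $L_2$ is immediate from the D0L description.
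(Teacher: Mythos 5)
Your proof is correct, but it reaches the separation by a genuinely different route than the paper. The paper takes the triple-copy language $\gset{www}{w\in\set{a,b}^*}$, shows it is ET0L by writing down an explicit $4$-table ET0L-system, and shows it is not TAL by citing a specific lemma in Kallmeyer's book; you instead take the unary exponential language $\gset{a^{2^n}}{n\geq 0}$, whose ET0L membership is even more immediate (it is D0L: axiom $a$, single rule $a\mapsto aa$, and D0L languages are ET0L), and whose non-membership in TAL you derive from the general structural fact that every tree-adjoining language is semilinear, while $\gset{2^n}{n\geq 0}$ is not a finite union of arithmetic progressions. Both arguments are sound, and the final step --- invoking \fullref{Theorem}{thm:cd} with the full trio hypotheses recorded in the paper, plus the containment of ET0L in the indexed languages for the parenthetical --- is identical. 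What your approach buys is conceptual clarity: it pins the divergence of the two classes on the constant-growth/semilinearity property of mildly context-sensitive formalisms rather than on a copying bound, and the witness language is simpler. What it costs is a heavier imported theorem: semilinearity of TALs (available, e.g., via the equivalence of TAG with linear indexed grammars and the semilinearity results of Vijay-Shanker and Weir) is a deeper fact than the pumping-style lemma the paper cites, and you would need to supply a precise reference for it, just as the paper does for Kallmeyer's Lemma 4.15.
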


\begin{proof}
  Let $L_2 = \gset{ www }{ w\in \set{a,b}^*}$.  Then $L_2$ is generated by an ET0L-system with $4$ tables, each
  containing a single non-trivial production. The non-trivial productions are $S\rightarrow TTT$, $T\rightarrow aT$,
  $T\rightarrow bT$, and $T\rightarrow \emptyword$.  However, $L_2$ is not a TAL \cite[Lemma
  4.15]{kallmeyer_parsing}. The result follows by \fullref{Theorem}{thm:cd}.
\end{proof}

\begin{corollary}
  There exists a monoid that admits a cross-section that is both ET0L and TAL but admits no context-free cross-section.
\end{corollary}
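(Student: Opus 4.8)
The plan is to invoke \fullref{Theorem}{thm:cd} with $\mathfrak{C} = \text{ET0L} \cap \text{TAL}$ and $\mathfrak{D}$ the class of context-free languages. Unlike the previous two corollaries, here $\mathfrak{C}$ is not one of the named classes but an intersection of two of them, so the first thing I would check is that $\mathfrak{C}$ is genuinely a full trio. This is almost formal: if $\mathfrak{C}_1$ and $\mathfrak{C}_2$ are full trios and $L \in \mathfrak{C}_1 \cap \mathfrak{C}_2$, then the image of $L$ under a homomorphism lies in each $\mathfrak{C}_i$, and hence in the intersection, and the same holds for inverse homomorphism and for intersection with a regular language; moreover $\mathfrak{C}_1 \cap \mathfrak{C}_2$ contains a non-empty language because both $\mathfrak{C}_1$ and $\mathfrak{C}_2$ contain every regular language \cite[p.~271]{hopcroft_automata}. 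Thus $\text{ET0L} \cap \text{TAL}$ is a full trio (and indeed the intersection of any finite family of full trios is one).

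By \fullref{Theorem}{thm:cd} it then suffices to exhibit a single language lying in $\text{ET0L} \cap \text{TAL}$ but not in the context-free languages. I would take the copy language $L_3 = \gset{ww}{w \in \set{a,b}^*}$. It is not context-free, by the standard pumping-lemma argument. It is ET0L, generated by the four-table system with tables $\set{S \to TT}$, $\set{T \to aT}$, $\set{T \to bT}$ and $\set{T \to \emptyword}$ (together with the identity productions on the remaining symbols): each table rewrites both occurrences of $T$ simultaneously and identically, so the two copies are built in lockstep, exactly as in the three-copy system used for $L_2$ above, and the final table terminates the derivation to produce $ww$. Finally, $L_3$ is the prototypical copy language realising cross-serial dependencies and is a tree-adjoining language \cite{kallmeyer_parsing}.

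Putting these together, $L_3 \in \text{ET0L} \cap \text{TAL}$ while $L_3$ is not context-free, so $\mathfrak{C} = \text{ET0L} \cap \text{TAL}$ is not a subclass of $\mathfrak{D}$; since both are full trios, \fullref{Theorem}{thm:cd} produces a monoid with a cross-section in $\text{ET0L} \cap \text{TAL}$ but no context-free cross-section, as required.

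The step I expect to need the most attention is the verification that $\text{ET0L} \cap \text{TAL}$ is a full trio: although each closure axiom transfers to the intersection essentially by inspection, it is precisely the hypothesis demanded by \fullref{Theorem}{thm:cd} and is the one ingredient not already recorded for the named classes. The remaining components \dash that $L_3$ is ET0L, that it is a TAL, and that it is not context-free \dash are either exhibited directly or are standard and citable.
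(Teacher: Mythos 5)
Your proposal is correct and takes essentially the same approach as the paper: both invoke Theorem~\ref{thm:cd} with $\mathfrak{D} = \text{CFL}$ and a witness language in $(\text{ET0L} \cap \text{TAL}) \setminus \text{CFL}$, the paper choosing $\gset{a^nb^nc^n}{n \in \nset}$ where you choose the copy language $\gset{ww}{w \in \set{a,b}^*}$, with the same lockstep-table ET0L construction in both cases. Your explicit verification that $\text{ET0L} \cap \text{TAL}$ is itself a full trio (so that Theorem~\ref{thm:cd} genuinely applies with this intersection as $\mathfrak{C}$) is a point the paper leaves implicit, and including it makes the argument airtight.
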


\begin{proof}
  Finally, let $L_3 = \gset{ a^n b^n c^n }{ n\in \nset}$.  Then $L_3$ is TAL \cite[Problem~4.1]{kallmeyer_parsing} and
  ET0L, since it is generated by the following $3$-table ET0L-system:
  \begin{align*}
    S &\rightarrow ABC; \\
    A &\rightarrow a,\quad B\rightarrow b,\quad C\rightarrow c;\\
    A&\rightarrow \emptyword, \quad B\rightarrow \emptyword, \quad C\rightarrow \emptyword.
  \end{align*}
  However, $L_3$ is easily shown not to be context-free by a pumping argument. The result follows by
  \fullref{Theorem}{thm:cd}.
\end{proof}

\bibliography{\jobname}

\newcommand{\etalchar}[1]{$^{#1}$}
\begin{thebibliography}{ECH{\etalchar{+}}92}

\bibitem[Aho68]{aho_indexed}
A.~V. Aho.
\newblock `{I}ndexed {G}rammars {\textendash } {A}n {E}xtension of
  {C}ontext-{F}ree {G}rammars'.
\newblock {\em Journal of the ACM}, 15, no.~4 (1968), pp. 647--671.
\newblock {\sc doi:} \href {http://dx.doi.org/10.1145/321479.321488}
  {{10.1145/321479.321488}}.

\bibitem[CM14]{cm_markov}
A.~J. Cain \& V.~Maltcev.
\newblock `{M}arkov semigroups, monoids, and groups'.
\newblock {\em International Journal of Algebra and Computation}, 24, no.~5
  (2014), pp. 609--653.
\newblock {\sc doi:} \href {http://dx.doi.org/10.1142/S021819671450026X}
  {{10.1142/S021819671450026X}}.

\bibitem[CRRT01]{campbell_autsg}
C.~M. Campbell, E.~F. Robertson, N.~Ru{\v{s}}kuc, \& R.~M. Thomas.
\newblock `{A}utomatic semigroups'.
\newblock {\em Theoretical Computer Science}, 250, no.~1-2 (2001), pp.
  365--391.
\newblock {\sc doi:} \href {http://dx.doi.org/10.1016/S0304-3975(99)00151-6}
  {{10.1016/S0304-3975(99)00151-6}}.

\bibitem[CS01]{cutting_regularnf}
A.~Cutting \& A.~Solomon.
\newblock `{R}emarks concerning finitely generated semigroups having regular
  sets of unique normal forms'.
\newblock {\em Journal of the Australian Mathematical Society}, 70, no.~03
  (2001), p. 293.
\newblock {\sc doi:} \href {http://dx.doi.org/10.1017/S1446788700002354}
  {{10.1017/S1446788700002354}}.

\bibitem[{\v{C}}ul74]{culik_onsomefamilies}
K.~{\v{C}}ulik.
\newblock `{O}n some families of languages related to developmental systems'.
\newblock {\em International Journal of Computer Mathematics}, 4, no.~1-4
  (1974), pp. 31--42.
\newblock {\sc doi:} \href {http://dx.doi.org/10.1080/00207167408803079}
  {{10.1080/00207167408803079}}.

\bibitem[DP84]{duske_linear}
J.~Duske \& R.~Parchmann.
\newblock `{L}inear indexed languages'.
\newblock {\em Theoretical Computer Science}, 32, no.~1-2 (1984), pp. 47--60.
\newblock {\sc doi:} \href {http://dx.doi.org/10.1016/0304-3975(84)90023-9}
  {{10.1016/0304-3975(84)90023-9}}.

\bibitem[ECH{\etalchar{+}}92]{epstein_wordproc}
D.~B.~A. Epstein, J.~W. Cannon, D.~F. Holt, S.~V.~F. Levy, M.~S. Paterson, \&
  W.~P. Thurston.
\newblock {\em {W}ord {P}rocessing in {G}roups}.
\newblock Jones {\&} Bartlett, Boston, MA, 1992.

\bibitem[ER77]{ehrenfeucht_onsome}
A.~Ehrenfeucht \& G.~Rozenberg.
\newblock `{O}n some context free languages that are not deterministic
  {E}{T}0{L} languages'.
\newblock {\em RAIRO Informatique th{\'{e}}orique}, 11, no.~4 (1977), pp.
  273--291.
\newblock {\sc doi:} \href {http://dx.doi.org/10.1051/ita/1977110402731}
  {{10.1051/ita/1977110402731}}.

\bibitem[ERS76]{ehrenfeucht_relationship}
A.~Ehrenfeucht, G.~Rozenberg, \& S.~Skyum.
\newblock `{A} {R}elationship between {E}{T}0{L} and {E}{D}{T}0{L} languages'.
\newblock {\em Theoretical Computer Science}, 1, no.~4 (1976), pp. 325--330.
\newblock {\sc doi:} \href {http://dx.doi.org/10.1016/0304-3975(76)90076-1}
  {{10.1016/0304-3975(76)90076-1}}.

\bibitem[GdlH90]{ghys_markov}
E.~Ghys \& P.~de~la Harpe.
\newblock `{L}a propri{\'{e}}t{\'{e}} de {M}arkov pour les groupes
  hyperboliques'.
\newblock In E.~Ghys \& P.~de~la Harpe, eds, {\em {S}ur les {G}roupes
  {H}yperboliques d'apr{\`{e}}s {M}ikhael {G}romov}, no.~83 in {\em Progress in
  Mathematics}, chapter~9, pp. 165--187. Birkh{\"{a}}user, Boston, 1990.

\bibitem[Gro87]{gromov_hyperbolic}
M.~Gromov.
\newblock `{H}yperbolic {G}roups'.
\newblock In S.~M. Gersten, ed., {\em {E}ssays in {G}roup {T}heory}, no.~8 in
  {\em Mathematical Sciences Research Institute Publications}, pp. 75--263.
  Springer-Verlag, 1987.

\bibitem[HU79]{hopcroft_automata}
J.~E. Hopcroft \& J.~D. Ullman.
\newblock {\em {I}ntroduction to {A}utomata {T}heory, {L}anguages, and
  {C}omputation}.
\newblock Addison--Wesley, Reading, MA, 1st edition, 1979.

\bibitem[Kal10]{kallmeyer_parsing}
L.~Kallmeyer.
\newblock {\em {P}arsing {B}eyond {C}ontext-{F}ree {G}rammars}.
\newblock Cognitive Technologies. Springer, 2010.

\bibitem[MR13]{maltcev_hopfian}
V.~Maltcev \& N.~Ruskuc.
\newblock `{O}n hopfian cofinite subsemigroups'.
\newblock  2013.
\newblock arXiv:~\href {http://arxiv.org/abs/1307.6929} {{1307.6929}}.

\bibitem[Roz73]{rozenberg_extension}
G.~Rozenberg.
\newblock `{E}xtension of {T}abled 0{L}-{S}ystems and {L}anguages'.
\newblock {\em International Journal of Computer {\&} Information Sciences}, 2,
  no.~4 (1973), pp. 311--336.
\newblock {\sc doi:} \href {http://dx.doi.org/10.1007/BF00985664}
  {{10.1007/BF00985664}}.

\end{thebibliography}
\bibliographystyle{alphaabbrv}

\end{document}